\numberwithin{equation}{section}
\newtheorem{theorem}{Theorem}[section]
\newtheorem{lemma}[theorem]{Lemma}
\newtheorem{proposition}[theorem]{Proposition}
\theoremstyle{definition}
\newtheorem{remark}[theorem]{Remark}
\theoremstyle{definition}
\theoremstyle{definition}
\def\dashint{\operatorname%
{\,\,\text{\bf-}\kern-.98em\DOTSI\intop\ilimits@\!\!}}
\def\\det{\text{det}}
\def\.5{\frac{1}{2}}
\def\bR{\mathbb{R}}
\def\bN{\mathbb{N}}
\def\cA{\mathcal{A}}
\def\cP{\mathcal{P}}
\def\cM{\mathcal{M}}
\newcommand{\RN}[1]{%
  \textup{\uppercase\expandafter{\romannumeral#1}}%
}
\renewcommand{\epsilon}{\varepsilon}
\begin{document}
\title[Nonlocal elliptic equations]{Dini estimates for nonlocal fully nonlinear elliptic equations}
\author[H. Dong]{Hongjie Dong}
\address[H. Dong]{Division of Applied Mathematics, Brown University,
182 George Street, Providence, RI 02912, USA}
\email{Hongjie\_Dong@brown.edu}
\thanks{H. Dong was partially supported by the NSF under agreements DMS-1056737 and DMS-1600593.}

\author[H. Zhang]{Hong Zhang}
\address[H. Zhang]{Division of Applied Mathematics, Brown University,
182 George Street, Providence, RI 02912, USA}
\email{Hong\_Zhang@brown.edu}
\thanks{H. Zhang was partially supported by the NSF under agreement DMS-1056737.}

\begin{abstract}
We obtain Dini type estimates for a class of concave fully nonlinear nonlocal elliptic equations of order $\sigma\in (0,2)$ with rough and non-symmetric kernels. The proof is based on a novel application of Campanato's approach and a refined $C^{\sigma+\alpha}$ estimate in \cite{DZ16}.
\end{abstract}
\maketitle

\section{Introduction and main results}
The paper is a continuation of our previous work \cite{DZ16}, where we studied  Schauder estimates for concave fully nonlinear nonlocal elliptic and parabolic equations. In particular,  when the kernels are translation invariant and the data are merely bounded and measurable, we proved the $C^{\sigma}$ estimate, which is very different from the classical theory for second-order elliptic and parabolic equations. In this paper, we consider concave fully nonlinear nonlocal elliptic equations with Dini continuous coefficients and nonhomogeneous terms, and establish a $C^\sigma$ estimate under these assumptions.

The study of classical elliptic equations with Dini continuous coefficients and data has a long history. Burch \cite{Burch78} first considered divergence type linear elliptic equations with Dini continuous coefficients and data, and estimated the modulus of continuity of the derivatives of solutions. The corresponding result for concave fully nonlinear elliptic equations was obtained by Kovats \cite{Kovats97}, which generalized a previous result by Safonov \cite{Saf88}. Wang \cite{Wang06} studied linear non-divergence type elliptic and parabolic equations with Dini continuous coefficients and data, and gave a simple proof to estimate the modulus of continuity of the second-order derivatives of solutions. See, also \cite{Lieb87,Sp81,Bao02,DG02,MM11,Y.Li2016}, and the references therein.

Recently, there is extensive work on the regularity theory for nonlocal elliptic and parabolic equations. For example, $C^\alpha$ estimates, $C^{1,\alpha}$ estimates, Evans-Krylov type theorem, and Schauder estimates were established in the past decade. See, for instance, \cite{CS09,CS11,DK11,DK13,KL13,CD14,CK15, JX15,JX152,JS15,Mou16}, and the references therein.  In particular, Mou \cite{Mou16} investigated a class of concave fully nonlinear nonlocal elliptic equations with smooth symmetric kernels, and obtained the $C^{\sigma}$ estimate under a slightly stronger assumption than the usual Dini continuity on the coefficients and data.  The author implemented a recursive Evans-Krylov theorem, which was first studied by Jin and Xiong \cite{JX15}, as well as a perturbation type argument.  In this paper, by using a novel perturbation type argument, we relax the regularity assumption to simply Dini continuity and also remove the symmetry and smoothness assumptions on the kernels.

To be more specific, we are interested in fully nonlinear nonlocal elliptic equations in the form

\begin{equation}
                                    \label{eq 1}
\inf_{\beta\in \cA}(L_{\beta}u+f_{\beta})=0,
\end{equation}
where $\cA$ is an index set and for each $\beta\in \cA$,
$$
L_{\beta}u=\int_{\bR^d} \delta u(x,y)K_{\beta}(x,y)\,dy,
$$
\begin{align*}
\delta u(x,y)=
\begin{cases}
u(x+y)-u(x)-y\cdot Du(x)\quad&\text{for}\,\, \sigma\in(1,2),\\
u(x+y)-u(x)-y\cdot Du(x)\chi_{B_1}\quad&\text{for}\,\, \sigma=1,\\
u(x+y)-u(x)\quad &\text{for}\,\, \sigma\in (0,1),
\end{cases}
\end{align*}
and
$$
K_{\beta}(x,y)=a_\beta(x,y)|y|^{-d-\sigma}.
$$
This type of nonlocal operators was first investigated by Komatsu  \cite{Komatsu84}, Mikulevi$\check{\text{c}}$ius and Pragarauskas \cite{MP92,MP14}, and later by Dong and Kim \cite{DK11,DK13}, and Schwab and Silvestre \cite{SS16}, to name a few.

We assume that $a(\cdot,\cdot)\in [\lambda, \Lambda]$ for some ellipticity constants $0<\lambda\le \Lambda$, and is merely measurable with respect to the $y$ variable. When $\sigma=1$, we additionally assume that
\begin{equation}
\int_{S_r}yK_{\beta}(x,y)\,ds=0,\label{eq10.58}
\end{equation}
for any $r>0$, where $S_r$ is the sphere of radius $r$ centered at the origin.
We say that a function $f$ is Dini continuous if its modulus of continuity $\omega_f$ is a Dini function, i.e.,
$$
\int_0^1 \omega_f(r)/r\,dr<\infty.
$$

The following theorem is our main result.

\begin{theorem}\label{thm 1}
Let $\sigma\in (0,2)$, $0<\lambda\le \Lambda<\infty$, and $\cA$ be an index set. Assume for each $\beta\in \cA$, $K_{\beta}$ satisfies \eqref{eq10.58} when $\sigma=1$, and
\begin{align*}
&\big|a_{\beta}(x,y)-a_{\beta}(x',y)\big|
\le \Lambda\omega_a(|x-x'|),\\
&|f_{\beta}(x)-f_{\beta}(x')|\le \omega_f(|x-x'|),\quad
\sup_{\beta\in \mathcal{A}}\|f_{\beta}\|_{L_\infty(B_1)}<\infty,
\end{align*}
where $\omega_a$ and $\omega_f$ are Dini functions.
Suppose $u\in C^{\sigma^+}(B_1)$ is a solution of \eqref{eq 1} in $B_1$ and is Dini continuous in $\bR^d$.
Then we have the a priori estimate
\begin{equation}\label{eq12.17}
[u]_{\sigma;B_{1/2}}\le C\|u\|_{L_\infty}+C\sup_\beta\|f_\beta\|_{L_\infty(B_1)}
+C\sum_{j=1}^\infty\big(\omega_u(2^{-j})+\omega_f(2^{-j})\big)
\end{equation}
where $C>0$ is a constant depending only on $d$, $\sigma$, $\lambda$, $\Lambda$, and $\omega_a$
%{\color{red}, $\omega_f$, $\omega_u$, and $\sup_{\beta\in \cA}\|f_\beta\|_{L_\infty(B_1)}$.}
Moreover, when $\sigma\neq 1$, we have
$$
\sup_{x_0\in B_{1/2}} [u]_{\sigma;B_r(x_0)}\to 0 \quad\text{as}\quad r\to 0
$$
with a decay rate depending only on $d$, $\sigma$, $\lambda$, $\Lambda$, $\omega_a$, $\omega_f$, $\omega_u$, and $\sup_{\beta\in \cA}\|f_\beta\|_{L_\infty(B_1)}$.
When $\sigma=1$,
%\begin{align*}
%\|Du\|_{L_\infty(B_{1/2})}\le C\big((\|u\|_{L_\infty}+\int_0^1\frac{\omega_u(r)}{r}\,dr)(1+\sum_{j=1}^\infty\omega_a(2^{-j}))\\
%+\sum_{j=1}^\infty\omega_f(2^{-j})+\sup_\beta\|f_\beta\|_{L_\infty(B_1)}\big).
%\end{align*}
%Moreover,
$Du$ is uniformly continuous in $B_{1/2}$ with a modulus of continuity controlled by the quantities before.
\end{theorem}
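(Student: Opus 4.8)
The plan is to run a Campanato-type iteration on dyadic balls centered at points $x_0\in B_{1/2}$. Fix such an $x_0$ and a small dyadic ratio $\theta\in(0,1)$ to be chosen, and set $r_k=\theta^k$. At each scale $r_k$ we compare $u$ with the solution $w$ of the equation obtained by freezing the coefficients at $x_0$, and we use the refined interior $C^{\sigma+\alpha}$ estimate of \cite{DZ16} for such translation-invariant equations to improve the polynomial approximation of $w$ from scale $r_k$ to scale $r_{k+1}$. This produces, for each $x_0$, polynomials $P_k$ --- constants when $\sigma\in(0,1)$ and affine functions $P_k(x)=c_k+b_k\cdot(x-x_0)$ when $\sigma\in[1,2)$ --- such that
\begin{equation*}
\mathcal{E}_k:=r_k^{-\sigma}\|u-P_k\|_{L_\infty(B_{r_k}(x_0))}
\end{equation*}
obeys a recursion of the schematic form $\mathcal{E}_{k+1}\le(1-\delta)\mathcal{E}_k+C\big(\omega_a(r_k)\,\overline{\mathcal{E}}_k+\omega_u(r_k)+\omega_f(r_k)\big)$, where $\delta\in(0,1)$ is the gain produced by the $\theta^{\sigma+\alpha}$ factor in the $C^{\sigma+\alpha}$ estimate (which beats the $\theta^{-\sigma}$ from rescaling once $\theta$ is small) and $\overline{\mathcal{E}}_k$ is a controlled combination of $\mathcal{E}_0,\dots,\mathcal{E}_k$ together with $\|u\|_{L_\infty(\bR^d)}$. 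Since $\omega_a$, $\omega_u$, $\omega_f$ are Dini, starting the iteration at a scale where $\sum_j\omega_a(\theta^j)$ is small (after an initial dilation) lets one reabsorb the $\omega_a$-weighted feedback and sum the recursion, which gives $\sup_k\mathcal{E}_k\le C\big(\|u\|_{L_\infty}+\sup_\beta\|f_\beta\|_{L_\infty(B_1)}+\sum_{j\ge1}(\omega_u(2^{-j})+\omega_f(2^{-j}))\big)$; interpolating between consecutive dyadic scales converts this into the seminorm bound \eqref{eq12.17}.

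The inductive step is a perturbation estimate. Given $P_k$, let $w$ solve $\inf_\beta(L^{x_0}_\beta w+f_\beta(x_0))=0$ in $B_{r_k}(x_0)$ with $w=u$ on $\bR^d\setminus B_{r_k}(x_0)$, where $L^{x_0}_\beta$ has kernel $a_\beta(x_0,y)|y|^{-d-\sigma}$. Comparing \eqref{eq 1} with this frozen equation, $u-w$ satisfies $\mathcal{M}^{+}(u-w)\ge-g$ and $\mathcal{M}^{-}(u-w)\le g$ in $B_{r_k}(x_0)$ in the viscosity sense, where $\mathcal{M}^{\pm}$ are the Pucci-type extremal operators for the class $\{a(y)|y|^{-d-\sigma}:a\in[\lambda,\Lambda]\}$ and
\begin{equation*}
g(x)=\sup_\beta\Big|\int_{\bR^d}\big(a_\beta(x,y)-a_\beta(x_0,y)\big)\,\delta u(x,y)\,|y|^{-d-\sigma}\,dy\Big|+\sup_\beta|f_\beta(x)-f_\beta(x_0)|,
\end{equation*}
so that $g(x)$ is at most $\omega_f(r_k)$ plus the nonlocal term. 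I would bound the nonlocal term by splitting the integral over the dyadic annuli $\{|y|\sim2^{-m}\}$: the factor $|a_\beta(x,y)-a_\beta(x_0,y)|$ is always $\le\Lambda\omega_a(r_k)$; the part $|y|\lesssim r_k$ is handled using $u\in C^{\sigma^+}$ and the approximations $P_j$ at scales $j\ge k$; the far part $|y|\gtrsim r_k$ is handled using the $P_j$ at scales $j<k$ together with the global modulus $\omega_u$, which is where the hypothesis that $u$ is Dini continuous on all of $\bR^d$ is used. The maximum principle then bounds $r_k^{-\sigma}\|u-w\|_{L_\infty(B_{r_k}(x_0))}$ by $C(\omega_a(r_k)\overline{\mathcal{E}}_k+\omega_u(r_k)+\omega_f(r_k))$. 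On the other hand, rescaling $B_{r_k}(x_0)$ to the unit ball and applying the refined $C^{\sigma+\alpha}$ estimate to $w-P_k$ gives a constant/affine $Q$ with $(\theta r_k)^{-\sigma}\|w-Q\|_{L_\infty(B_{r_{k+1}}(x_0))}\le C\theta^{\alpha}\mathcal{E}_k$. Fixing $\theta$ with $C\theta^{\alpha}\le\tfrac12$ and setting $P_{k+1}=Q$ closes the induction.

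For the remaining assertions I would track tails rather than full sums. The recursion gives $\mathcal{E}_k\le\eta_k$ with $\eta_k\to0$, where $\eta_k$ is bounded by $(1-\delta)^k\mathcal{E}_0$ plus the tail $\sum_{j\ge k}(\omega_a(2^{-j})+\omega_u(2^{-j})+\omega_f(2^{-j}))$ of a convergent series. When $\sigma\ne1$, interpolating $\|u-P_k\|_{L_\infty(B_{r_k}(x_0))}\le r_k^\sigma\eta_k$ between consecutive scales yields $\sup_{x_0\in B_{1/2}}[u]_{\sigma;B_r(x_0)}\le\tilde\omega(r)$ with $\tilde\omega(r)\to0$, uniformly in $x_0$. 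When $\sigma=1$, comparing $P_k$ and $P_{k+1}$ on $B_{r_{k+1}}(x_0)$ gives $|b_{k+1}-b_k|\lesssim\eta_k$, hence $b_k\to b(x_0)=:Du(x_0)$ with $|Du(x_0)-b_k|\lesssim\sum_{j\ge k}\eta_j\to0$; comparing these telescoping series at two nearby centers then shows that $Du$ is uniformly continuous on $B_{1/2}$ with a modulus of continuity controlled by the quantities above.

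The step I expect to be the main obstacle is the perturbation estimate, and within it the control of the nonlocal error $L_\beta u-L^{x_0}_\beta u$: because it is a global integral it cannot be dominated by any single local seminorm of $u$, so the whole hierarchy of polynomial approximations produced by the induction, together with the global Dini continuity of $u$, must be fed back into it, and the bookkeeping must be organized so that the $\omega_a$-weighted feedback $\omega_a(r_k)\overline{\mathcal{E}}_k$ is genuinely reabsorbed and does not destroy summability of the recursion. A secondary difficulty is the borderline case $\sigma=1$: the natural rescaling does not preserve the gradient correction in $\delta u$, so the cancellation hypothesis \eqref{eq10.58} must be used throughout to keep the affine parts of the $P_k$ under control and to obtain a genuine modulus of continuity for $Du$.
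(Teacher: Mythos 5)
Your proposal correctly identifies the main ingredients from the paper---freezing the coefficients at $x_0$, invoking the refined interior $C^{\sigma+\alpha}$ estimate of \cite{DZ16} at each dyadic scale, a comparison--principle estimate for the difference $u-w$, and exploiting summability of $\omega_a$, $\omega_u$, $\omega_f$ over dyadic scales to close an absorption argument. These are indeed the building blocks the paper uses. However, there is a genuine gap in the way you have organized the perturbation step, and it is precisely the difficulty you flag at the end as the ``main obstacle.''

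The problem is in the recursion
$\mathcal{E}_{k+1}\le(1-\delta)\mathcal{E}_k+C\big(\omega_a(r_k)\overline{\mathcal{E}}_k+\omega_u(r_k)+\omega_f(r_k)\big)$
where you stipulate that $\overline{\mathcal{E}}_k$ is ``a controlled combination of $\mathcal{E}_0,\dots,\mathcal{E}_k$ together with $\|u\|_{L_\infty}$.'' This is inconsistent with the sentence a few lines later where you say that the near part $|y|\lesssim r_k$ of the nonlocal error ``is handled using $u\in C^{\sigma^+}$ and the approximations $P_j$ at scales $j\ge k$.'' To estimate
$\sup_{x\in B_{r_k}(x_0)}\int_{|y|\lesssim r_k}|\delta u(x,y)|\,|y|^{-d-\sigma}\,dy$
quantitatively one needs H\"older (or Campanato) information on $u$ on balls of radii \emph{smaller} than $r_k$, and these balls are centered at $x$, not at $x_0$. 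Thus $\overline{\mathcal{E}}_k$ must involve scales $j>k$ \emph{and} centers other than $x_0$; it cannot be a combination of the present and past excesses at the fixed $x_0$. Once this is admitted, the ``forward recursion with a contraction factor'' picture breaks down: the right-hand side at step $k$ depends on the full hierarchy at all scales. What actually makes the argument close in the paper (Proposition \ref{prop3.2}) is that the controlled quantity is already a sum over \emph{all} dyadic scales of $\sup_{x_0\in\bR^d}[u-P_{x_0}u]_{\alpha;B_{2^{-j}}(x_0)}$ (or $[u]_\alpha$ when $\sigma<1$), and the perturbation estimate is summed in $k$ so that the same series appears on both sides with a small prefactor $2^{-k_0\alpha}$; absorption then proceeds in one shot rather than via a step-by-step contraction. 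So your scheme is morally the same but cannot literally be closed as written; the bookkeeping has to be promoted to a simultaneous estimate over all scales and all centers, which is more than a cosmetic change.

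Two secondary remarks. First, the paper does not iterate at a fixed $x_0\in B_{1/2}$ directly; it first proves a \emph{global} version of the estimate for solutions on $\bR^d$ (Proposition \ref{prop3.2}) and then localizes using cutoffs $\eta_k$ applied to $u$, which produces commutator-type errors $h_{k\beta}$ that must be estimated in a Dini modulus of continuity. Your direct-in-$B_1$ approach is a legitimate alternative once the absorption is repaired, but be aware that it also has to be done with $\sup_{x_0}$ and a tail estimate to conclude $\sup_{x_0\in B_{1/2}}[u]_{\sigma;B_r(x_0)}\to 0$. Second, you correctly note that $\sigma=1$ is delicate and that \eqref{eq10.58} is needed, but you underestimate what else is required: tracking the affine coefficients $b_k$ is not enough by itself because the relation between the $L_\infty$ excess and lower-order H\"older seminorms degenerates at $\sigma=1$ (Lemma \ref{lem2.4}\,\eqref{eq10.32} requires $\sigma>1$). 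The paper's fix is to choose the comparison affine function as the first-order Taylor expansion of the \emph{mollified} $u^{(2^{-k})}$ (Lemma \ref{lem2.2}); without something of this kind the recursion for $\sigma=1$ will not close.
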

Here for simplicity we assume $u\in C^{\sigma^+}(B_1)$, which means that $u\in C^{\sigma+\varepsilon}(B_1)$ for some arbitrary $\varepsilon>0$. This condition is only needed for $L_\beta u$ to be well defined, and may be replaced by other weaker conditions.
\begin{remark}
By a careful inspection of the proofs below, one can see that the estimates above in fact only depend on $d$, $\sigma$, $\lambda$, $\Lambda$, $\sup_{\beta\in \cA}\|f_\beta\|_{L_\infty(B_1)}$, the modulus continuity $\omega_f$ of $f_\beta$ in $B_1$, $\omega_a(r)$, $\omega_u(r)$ for $r\in (0,1)$, and $\|u\|_{L_{1,w}}$, where the weight $w=w(x)$ is equal to $(1+|x|)^{-d-\sigma}$.
In particular, $u$ does not need to be globally bounded in $\bR^d$.
\end{remark}
Roughly speaking,  the proof can be divided into two steps: We first show that Theorem \ref{thm 1} holds when the equation is satisfied in the whole space; Then we implement a localization argument to treat the general case.  In Step one, our proof is based on a refined $C^{\sigma+\alpha}$ estimate in our previous paper \cite{DZ16} and a new perturbation type argument, as the standard perturbation techniques do not seem to work here. The novelty of this method is that instead of estimating $C^\sigma$ semi-norm of the solution, we construct and bound certain semi-norms of the solution, see Lemmas \ref{lem2.3} and \ref{lem2.4}. When $\sigma<1$, such semi-norm is defined as a series of lower-order H\"older semi-norms of $u$. This is in the spirit of Campanato's approach first developed in \cite{Ca66}. Heuristically, in order for the nonlocal operator to be  well defined, the solution needs to be smoother than $C^\sigma$. To resolve this problem, we divide the integral domain into annuli, which allows us to use a lower-order semi-norm to estimate the integral in each annulus.  The series of lower-order semi-norms, which turns out to be slightly stronger than the $C^\sigma$ semi-norm, further implies that
$$
[u]_{\sigma;B_r(x_0)}\rightarrow 0\quad\text{as} \quad r\rightarrow 0
$$
uniformly in $x_0$.
In particular,  when $\sigma = 1$ we are able to estimate the modulus of continuity of the gradient of solutions. The proof of the case when $\sigma\ge 1$ is more difficult than that of the case when $\sigma<1$. This is mainly due to the fact that the series of lower-order H\"older semi-norms of the solution itself is no longer sufficient to estimate the $C^\sigma$ norm. Therefore, we need to subtract  a polynomial from the solution  in the construction of the semi-norm. In some sense, the polynomial should be taken to minimize the series.
It turns out that when $\sigma>1$, up to a constant we can choose the polynomial to be the first-order Taylor's expansion of the solution. The case $\sigma= 1$ is particularly challenging since the polynomial needs to be selected carefully, for which an additional mollification argument is applied.

The organization of this paper is as follows. In the next section, we introduce some notation and preliminary results that are necessary in the proof of our main theorem. Some of these results might be of independent interest. In section 3, we first prove a global version of Theorem \ref{thm 1} and then localize the result to obtain Theorem \ref{thm 1}.
\section{Preliminaries}

We will frequently use the following identity
\begin{align}
                            \label{eq10.22}
&2^j\big(u(x+2^{-j}\ell)-u(x)\big)-\big(u(x+\ell)-u(x)\big)\nonumber\\
&=\sum_{k=1}^j 2^{k-1}\big(2u(x+2^{-k}\ell)-u(x+2^{-k+1}\ell)-u(x)\big),
\end{align}
which holds for any $\ell\in \bR^d$ and nonnegative integer $j$.

Denote $\cP_1$ to be the set of first-order polynomials of $x$.
\begin{lemma}
                            \label{lem2.3}
Let $\alpha\in (0,\sigma)$ be a constant.

(i) When $\sigma\in (0,1)$, we have
\begin{equation}
                            \label{eq10.08}
[u]_{\sigma}\le C\sup_{r>0}\sup_{x_0\in \bR^{d}} r^{\alpha-\sigma}[u]_{\Lambda^\alpha(B_r(x_0))}
\le C\sup_{r>0}\sup_{x_0\in \bR^{d}} r^{\alpha-\sigma}[u]_{\alpha;B_r(x_0)},%\label{eq 2.42a}
 \end{equation}
where $C>0$ is a constant depending only on $d$, $\alpha$, and $\sigma$.

(ii) When $\sigma\in (1,2)$, we have
\begin{equation}
                                        \label{eq10.09}
[u]_{\sigma}\le C\sup_{r>0}\sup_{x_0\in \bR^{d}} r^{\alpha-\sigma}[u]_{\Lambda^\alpha(B_r(x_0))}
\le C\sup_{r>0}\sup_{x_0\in \bR^{d}} r^{\alpha-\sigma}\inf_{p\in \mathcal{P}_1}[u-p]_{\alpha;B_r(x_0)},%\label{eq 2.42}
 \end{equation}
where $C>0$ is a constant depending only on $d$, $\alpha$, and $\sigma$.

(iii) When $\sigma=1$, we have
\begin{align}
\|Du\|_{L_\infty}&\le C\sum_{k=0}^\infty\sup_{x_0\in \bR^{d}} 2^{-k(\alpha-1)}[u]_{\Lambda^{\alpha}(B_{2^{-k}}(x_0))}
+C\sup_{\substack{x,x'\in \bR^d\\|x-x'|=1}}|u(x)-u(x')|\nonumber\\
                                            \label{eq10.10}
&\le C\sum_{k=0}^\infty\sup_{x_0\in \bR^{d}} 2^{-k(\alpha-1)}\inf_{p\in \mathcal{P}_1}[u-p]_{\alpha;B_{2^{-k}}(x_0)}+C\sup_{\substack{x,x'\in \bR^d\\|x-x'|=1}}|u(x)-u(x')|,%\label{eq 2.42}
 \end{align}
where $C>0$ is a constant depending only on $d$ and $\alpha$. Moreover, we can estimate the modulus of continuity of $Du$ by the remainder of the summation on the right-hand side of \eqref{eq10.10}.
\end{lemma}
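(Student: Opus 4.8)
Here is the plan.

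\textbf{A common engine.} All three parts are driven by the telescoping identity \eqref{eq10.22}. The point is that every summand on its right-hand side is a \emph{centered second difference}: $2u(x+2^{-k}\ell)-u(x+2^{-k+1}\ell)-u(x)=-\bigl(u(w+v)+u(w-v)-2u(w)\bigr)$ with $w=x+2^{-k}\ell$ and $v=2^{-k}\ell$, and the three points $w,w\pm v$ all lie in $B_{2^{-k+1}|\ell|}(x)$; hence this quantity is bounded by $[u]_{\Lambda^\alpha(B_{2^{-k+1}|\ell|}(x))}(2^{-k}|\ell|)^{\alpha}$. Since a centered second difference annihilates every $p\in\mathcal{P}_1$, one may replace $[u]_{\Lambda^\alpha(B_r(x_0))}$ throughout by $\inf_{p\in\mathcal{P}_1}[u-p]_{\Lambda^\alpha(B_r(x_0))}$, and the Zygmund seminorm is in turn $\le C[u]_{\alpha;B_r(x_0)}$ (for $\alpha<1$ by splitting a second difference into two first differences; for $\alpha\in[1,2)$ by integrating $Du$); this yields the second inequality in each of \eqref{eq10.08}--\eqref{eq10.10}, so below I only address the first inequalities.

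\textbf{Cases $\sigma<1$ and $\sigma\in(1,2)$.} For (i), fix $x,h$, apply \eqref{eq10.22} with $\ell=2^{j}h$ and divide by $2^{j}$: this writes $u(x+h)-u(x)$ as $2^{-j}\bigl(u(x+2^{j}h)-u(x)\bigr)$ plus weighted second differences at the dyadic scales between $|h|$ and $2^{j}|h|$. Writing $M$ for the right-hand side of \eqref{eq10.08}, a short computation bounds the $k$-th term by $C\,2^{(k-j)(1-\sigma)}|h|^{\sigma}M$; since $\sigma<1$ these sum (starting from the small-scale end $k\approx j$) to $\le C|h|^{\sigma}M$, while $2^{-j}\bigl(u(x+2^{j}h)-u(x)\bigr)\to0$ because $M<\infty$ forces $\operatorname{osc}_{B_{r}(x)}u\le Cr^{\sigma}M$ and $\sigma<1$. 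Letting $j\to\infty$ gives $[u]_{\sigma}\le CM$. For (ii), estimate instead the Zygmund seminorm $[u]_{\Lambda^{\sigma}}\sim[u]_{\sigma}$: adding \eqref{eq10.22} for $\ell$ and for $-\ell$ writes $u(x+\ell)+u(x-\ell)-2u(x)$ as $2^{j}\bigl(u(x+2^{-j}\ell)+u(x-2^{-j}\ell)-2u(x)\bigr)$ plus weighted centered second differences at the scales $2^{-k}|\ell|$, $1\le k\le j$; now the $k$-th term is $\le C\,2^{k(1-\sigma)}|\ell|^{\sigma}M$, which sums (dominated by the large-scale end $k\approx1$) to $\le C|\ell|^{\sigma}M$, and the leftover term tends to $0$ once $u\in C^{1,\gamma}_{\mathrm{loc}}$ for some $\gamma>0$ (immediate in the applications where $u\in C^{\sigma^{+}}$, and otherwise obtained by a short bootstrap from finiteness of the right-hand side). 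Letting $j\to\infty$ gives $[u]_{\sigma}\le CM$.

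\textbf{Case $\sigma=1$.} Here the exponents conspire so that the $k$-th term carries no decay in $k$, so no single scale-invariant quantity works and one must retain the seminorms $A_{m}:=\sup_{x_{0}}[u]_{\Lambda^{\alpha}(B_{2^{-m}}(x_{0}))}$ at every dyadic scale. Take $|\ell|=1$ and let $j\to\infty$ in \eqref{eq10.22}: when the right-hand side of \eqref{eq10.10} is finite the partial sums converge absolutely, which identifies $2^{j}\bigl(u(x+2^{-j}\ell)-u(x)\bigr)$ with the directional derivative $\partial_{\ell}u(x)$ and gives $\partial_{\ell}u(x)=\bigl(u(x+\ell)-u(x)\bigr)+\sum_{k\ge1}2^{k-1}\bigl(2u(x+2^{-k}\ell)-u(x+2^{-k+1}\ell)-u(x)\bigr)$. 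The first term is $\le\sup_{|w-w'|=1}|u(w)-u(w')|$, and bounding the $k$-th summand by $2^{k-1}A_{k-1}2^{-k\alpha}$ and reindexing $m=k-1$ shows the series is $\le 2^{-\alpha}\sum_{m\ge0}2^{-m(\alpha-1)}A_{m}$, exactly the sum in \eqref{eq10.10}. Supremizing over $x$ and unit $\ell$ bounds $\|Du\|_{L_\infty}$. For the modulus of continuity of $Du$ one repeats this localized around two points $x,x'$ with $|x-x'|\approx2^{-N}$: the scales $m\le N$ combine into finitely many differences of $u$, each $O(|x-x'|)$ times a seminorm, while the scales $m>N$ contribute precisely the tail $\sum_{m>N}2^{-m(\alpha-1)}A_{m}$.

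\textbf{Main obstacle.} The case $\sigma\ge1$, and especially $\sigma=1$: for $\sigma<1$ one merely sums a genuinely convergent geometric series and discards one vanishing term, whereas for $\sigma\ge1$ the series is only borderline (or balanced at its large-scale end), so one must first extract enough regularity of $u$ from finiteness of the right-hand side to pass to the limit and identify the tangent object ($\partial_{\ell}u(x)$, resp.\ the first-order Taylor polynomial)---equivalently, to know $u\in C^{1}_{\mathrm{loc}}$---and, when $\sigma=1$, both organize the full sum over scales and choose the first-order polynomial at each scale so that the two-point comparison controlling the modulus of continuity of $Du$ telescopes. Everything else---the second-difference identities, the Zygmund-versus-H\"older comparison, and the dyadic bookkeeping---is routine.
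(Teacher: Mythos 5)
Your unified telescoping strategy (run the dyadic identity \eqref{eq10.22} at every scale, bound each second difference by a Zygmund seminorm, sum the series) is a genuinely different route from the paper's proof of (i) and (ii): there the paper uses a one-step absorption argument --- for (i), $u(x')-u(x)$ is half a doubled increment plus a second difference and the $2^{\sigma-1}[u]_\sigma$ term is absorbed since $\sigma<1$; for (ii), $D_\ell u$ is compared to a difference quotient at scale $\varepsilon h$ and the $2\varepsilon^{\sigma-1}[Du]_{\sigma-1}$ term is absorbed by choosing $\varepsilon$ small. Your version has the appeal of treating (i), (ii), (iii) by a single mechanism and making the scale-by-scale bookkeeping explicit; the paper's absorption avoids two issues you then have to address separately, namely the convergence of the leftover term in the telescope and (for (ii)) the equivalence $[u]_\sigma\sim[u]_{\Lambda^\sigma}$. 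For (iii) your argument coincides with the paper's.

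Two repairs are needed. In (i) your justification for the remainder term vanishing is incorrect: finiteness of $M:=\sup_{r,x_0}r^{\alpha-\sigma}[u]_{\Lambda^\alpha(B_r(x_0))}$ does \emph{not} force $\operatorname{osc}_{B_r(x)}u\le Cr^\sigma M$ --- for any affine $u$ the Zygmund seminorm vanishes so $M=0$, yet the oscillation grows linearly. The correct fix in an a priori estimate is to assume $[u]_\sigma<\infty$ from the start, so $2^{-j}|u(x+2^jh)-u(x)|\le[u]_\sigma 2^{-j(1-\sigma)}|h|^\sigma\to0$; alternatively, stop at finite $j$ and absorb, as the paper does. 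In (ii) you finish by invoking $[u]_\sigma\sim[u]_{\Lambda^\sigma}$ for $\sigma\in(1,2)$; this is true but is itself a result of comparable depth whose proof runs precisely via the absorption argument the paper applies directly to $[Du]_{\sigma-1}$, so as written it hides the key step rather than supplying it --- either cite it or give the short argument. Finally, identifying $\lim_j 2^j(u(x+2^{-j}\ell)-u(x))$ with $D_\ell u(x)$ in (iii) from absolute convergence alone deserves a sentence (the limit a priori exists only along the dyadic sequence), but this is a minor point and the paper's own proof elides it too since in the application $u\in C^{\sigma^+}$.
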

\begin{proof}
First we consider the case when $\sigma\in (0,1)$. Let $x,x'\in \bR^d$ be  two different points. Denote $h=|x-x'|$. Since
$$
u(x')-u(x)=\frac 1 2 \big(u(2x'-x)-u(x)\big)-\frac 1 2\big(u(2x'-x)-2u(x')+u(x)\big),
$$
we get
\begin{align*}
&h^{-\sigma}|u(x')-u(x)|\\
&\le 2^{\sigma-1} (2h)^{-\sigma}\big(u(2x'-x)-u(x)\big)+h^{-\sigma}|u(2x'-x)-2u(x')+u(x)|\\
&\le 2^{\sigma-1} (2h)^{-\sigma}\big(u(2x'-x)-u(x)\big)+\sup_{x\in \bR^{d}} h^{\alpha-\sigma}[u]_{\Lambda^\alpha(B_h(x))}.
\end{align*}
Taking the supremum with respect to $x$ and $x'$ on both sides, we get
$$
[u]_\sigma\le 2^{\sigma-1}[u]_\sigma+\sup_{x\in \bR^{d}} h^{\alpha-\sigma}[u]_{\Lambda^\alpha(B_h(x))},
$$
which together with the triangle inequality gives \eqref{eq10.08}.

For $\sigma\in (1,2)$, let $\ell\in\bR^d$ be a unit vector and $\varepsilon\in (0,1/16)$ be a small constant to be specified later. For any two distinct points $x,x'\in \bR^d$, we denote $h=|x-x'|$. By the triangle inequality,
\begin{equation}
                        \label{eq8.43}
h^{1-\sigma}|D_\ell u(x)-D_\ell u(x')|
\le I_1+I_2+I_3,
\end{equation}
where
\begin{align*}
I_1&=h^{1-\sigma}|D_\ell u(x)-(\varepsilon h)^{-1}(u(x+\varepsilon h\ell)-u(x))|,\\
I_2&=h^{1-\sigma}|D_\ell u(x')-(\varepsilon h)^{-1}(u(x'+\varepsilon h\ell)-u(x'))|,\\
I_3&=h^{1-\sigma}(\varepsilon h)^{-1}|(u(x+\varepsilon h\ell)-u(x))-(u(x'+\varepsilon h\ell)-u(x'))|.
\end{align*}
By the mean value theorem,
\begin{equation}
                                    \label{eq3.02b}
I_1+I_2\le 2\varepsilon^{\sigma-1} [Du]_{\sigma}.
\end{equation}
Now we choose and fix a $\varepsilon$ sufficiently small depending only on $\sigma$ such that $2\varepsilon^{\sigma-1}\le 1/2$.
Using the triangle inequality, we have
\begin{align*}
I_3 \le Ch^{-\sigma}\big(|u(x+\varepsilon h\ell)+u(x')-2u(\bar x)|+|u(x'+\varepsilon h\ell)+u(x)-2u(\bar x)|\big),
\end{align*}
where $\bar x=(x+\varepsilon h\ell+x')/2$.
Thus,
\begin{equation}
                                        \label{eq3.03b}
I_3\le Ch^{\alpha-\sigma}[u]_{\Lambda^\alpha(B_{h}(\bar x))}.
\end{equation}
Combining \eqref{eq8.43}, \eqref{eq3.02b}, and \eqref{eq3.03b}, we get \eqref{eq10.09} as before.

Finally, we treat the case when $\sigma=1$. %Similar to \eqref{eq10.12}, we have
%\begin{align*}
%&2^j\big(u(x+2^{-j}\ell)-u(x)\big)-2^{j-1}\big(u(x+2^{-j+1}\ell)-u(x)\big)\\
%&=2^{j-1}\big(2u(x+2^{-j}\ell)-u(x+2^{-j+1}\ell)-u(x)\big),
%\end{align*}
%and thus
%\begin{align*}
%&2^j\big(u(x+2^{-j}\ell)-u(x)\big)-\big(u(x+\ell)-u(x)\big)\\
%&=\sum_{k=1}^j 2^{k-1}\big(2u(x+2^{-k}\ell)-u(x+2^{-k+1}\ell)-u(x)\big),
%\end{align*}
%which implies
It follows from \eqref{eq10.22} that
\begin{align*}
2^j\big|u(x+2^{-j}\ell)-u(x)\big|\le 2|u(x+\ell)-u(x)|+\sum_{k=1}^j 2^{-k(\alpha-1)}[u]_{\Lambda^\alpha(B_{2^{-k}}(x+2^{-k}\ell))}.
\end{align*}
Taking $j\to \infty$, we obtain the desired inequality. For the continuity estimate, let $\ell\in\bR^d$ be a unit vector. Assume that $|x-x'|\in [2^{-i-1},2^{-i})$ for some positive integer $i$.
From \eqref{eq10.22}, for any $j\ge i+1$,
\begin{align*}
&2^j\big(u(x+2^{-j}\ell)-u(x)\big)-2^i\big(u(x+2^{-i}\ell)-u(x)\big)\\
&=\sum_{k=i+1}^j 2^{k-1}\big(2u(x+2^{-k}\ell)-u(x+2^{-k+1}\ell)-u(x)\big)
\end{align*}
and a similar identity holds with $x'$ in place of $x$. Then we have
\begin{align*}
&|D_\ell u(x)-D_\ell u(y)|=\lim_{j\to \infty}\Big|2^j\big(u(x+2^{-j}\ell)-u(x)\big)
-2^j\big(u(x'+2^{-j}\ell)-u(x')\big)\Big|\\
&\le \Big|2^i\big(u(x+2^{-i}\ell)-u(x)\big)
-2^i\big(u(x'+2^{-i}\ell)-u(x')\big)\Big|\\
&\quad +
\sum_{k=i+1}^\infty\sup_{x_0\in \bR^{d}} 2^{-k(\alpha-1)}[u]_{\Lambda^\alpha(B_{2^{-k}}(x_0))}.
\end{align*}
By the triangle inequality, the first term on the right-hand side is bounded by
$$
2^i|u(x+2^{-i}\ell)-2u(\bar x)+u(x')|+
2^i|u(x'+2^{-i}\ell)-2u(\bar x)+u(x)|
$$
with $\bar x=(x+2^{-i}+x')/2$, which is further bounded by
$$
2^{1+i(1-\alpha)}[u]_{\Lambda^\alpha(B_{2^{-i}}(\bar x))}.
$$
Therefore,
$$
|D_\ell u(x)-D_\ell u(y)|\le C\sum_{k=i}^\infty\sup_{x_0\in \bR^{d}} 2^{-k(\alpha-1)}[u]_{\Lambda^\alpha(B_{2^{-k}}(x_0))},
$$
which converges to $0$ as $i\to \infty$ uniformly with respect to $\ell$. The lemma is proved.
\end{proof}

The following lemma will be used to estimate the error term in the freezing coefficient argument.
\begin{lemma}
                            \label{lem2.4}
Let $\alpha\in (0,1)$ and $\sigma\in (1,2)$ be constants.
Then for any $u\in C^1$, we have
\begin{equation}
                                        \label{eq8.32}
\sum_{k=0}^\infty 2^{k(\sigma-\alpha)}\sup_{x_0\in \bR^d}
[u-P_{x_0}u]_{\alpha;B_{2^{-k}(x_0)}}
\le C\sum_{k=0}^\infty 2^{k(\sigma-\alpha)}
\sup_{x_0\in \bR^d}\inf_{p\in \cP_1}[u-p]_{\alpha;B_{2^{-k}(x_0)}}
\end{equation}
and
\begin{equation}
                                        \label{eq10.32}
\sum_{k=0}^\infty 2^{k\sigma}
\sup_{x_0\in \bR^d}\|u-P_{x_0}u\|_{L_\infty(B_{2^{-k}(x_0)})}
\le C\sum_{k=0}^\infty 2^{k(\sigma-\alpha)}\sup_{x_0\in \bR^d}
[u]_{\Lambda^{\alpha}(B_{2^{-k}(x_0)})},
\end{equation}
where $P_{x_0}u$ is the first-order Taylor expansion of $u$ at $x_0$, and $C>0$ is a constant depending only on $d$, $\alpha$, and $\sigma$.
\end{lemma}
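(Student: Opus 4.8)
The plan is to derive both inequalities from a single mechanism: a telescoping comparison across the dyadic scales $r=2^{-k}$, in which the standing hypothesis $\sigma>1$ is used only to sum a geometric series, namely $\sum_{k\le j}2^{k(\sigma-1)}\le C2^{j(\sigma-1)}$. Write $a_j:=\sup_{x_0\in\bR^d}\inf_{p\in\cP_1}[u-p]_{\alpha;B_{2^{-j}}(x_0)}$ and $b_j:=\sup_{x_0\in\bR^d}[u]_{\Lambda^\alpha(B_{2^{-j}}(x_0))}$. We may assume the right-hand sides of \eqref{eq8.32} and \eqref{eq10.32} are finite; since $\sigma-\alpha>1-\alpha$, this already gives $\sum_j 2^{j(1-\alpha)}a_j<\infty$ and $\sum_j 2^{j(1-\alpha)}b_j<\infty$, which will make the telescoping series below absolutely convergent. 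We repeatedly use that, for $\alpha\in(0,1)$, an affine function $q(x)=q(x_0)+b\cdot(x-x_0)$ obeys $c_1|b|r^{1-\alpha}\le[q]_{\alpha;B_r(x_0)}\le c_2|b|r^{1-\alpha}$, so that $\alpha$-seminorm estimates for affine functions are equivalent to estimates on their slopes.

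For \eqref{eq8.32}, fix $x_0$ and for each $k\ge0$ let $p_k\in\cP_1$ be a minimizer of $\inf_{p}[u-p]_{\alpha;B_{2^{-k}}(x_0)}$ (one exists, since this functional is lower semicontinuous in $p$ and coercive in its linear part), so $[u-p_k]_{\alpha;B_{2^{-k}}(x_0)}\le a_k$. Applying the triangle inequality to $p_{k+1}-p_k$ on $B_{2^{-k-1}}(x_0)$ together with the slope/seminorm comparability, $|\nabla p_{k+1}-\nabla p_k|\le C2^{k(1-\alpha)}\big([u-p_{k+1}]_{\alpha;B_{2^{-k-1}}(x_0)}+[u-p_k]_{\alpha;B_{2^{-k}}(x_0)}\big)\le C2^{k(1-\alpha)}(a_{k+1}+a_k)$. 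Hence $(\nabla p_k)$ is Cauchy; its limit equals $Du(x_0)$, which one sees by evaluating $u-p_k$ at $x_0$ and at $x_0+2^{-k}e$ for a fixed unit vector $e$ and letting $k\to\infty$, using that $u$ is differentiable at $x_0$. Summing the telescoping bound gives $|Du(x_0)-\nabla p_k|\le C\sum_{j\ge k}2^{j(1-\alpha)}a_j$, and since $P_{x_0}u-p_k$ is affine with slope $Du(x_0)-\nabla p_k$, the comparability yields $[P_{x_0}u-p_k]_{\alpha;B_{2^{-k}}(x_0)}\le C2^{-k(1-\alpha)}\sum_{j\ge k}2^{j(1-\alpha)}a_j$. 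Adding $[u-p_k]_{\alpha;B_{2^{-k}}(x_0)}\le a_k$ and taking the supremum in $x_0$,
\[
\sup_{x_0}[u-P_{x_0}u]_{\alpha;B_{2^{-k}}(x_0)}\le a_k+C2^{-k(1-\alpha)}\sum_{j\ge k}2^{j(1-\alpha)}a_j.
\]
Multiplying by $2^{k(\sigma-\alpha)}$, summing in $k$, interchanging the order of summation, and using $\sum_{k\le j}2^{k(\sigma-1)}\le C2^{j(\sigma-1)}$, the right-hand side is bounded by $C\sum_j 2^{j(\sigma-\alpha)}a_j$, which is \eqref{eq8.32}.

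For \eqref{eq10.32}, fix $x_0$, set $v:=u-P_{x_0}u$, and note $v(x_0)=0$, $Dv(x_0)=0$, and $v$ has the same second differences as $u$. Apply the identity \eqref{eq10.22} to $v$ with $x=x_0$; since $v(x_0)=0$ and $2^jv(x_0+2^{-j}\ell)\to0$ as $j\to\infty$ (differentiability of $v$ at $x_0$ with vanishing value and gradient), letting $j\to\infty$ gives, for any $\ell$ with $|\ell|\le2^{-n}$,
\[
v(x_0+\ell)=\sum_{k=1}^\infty 2^{k-1}\big(v(x_0+2^{-k+1}\ell)-2v(x_0+2^{-k}\ell)+v(x_0)\big).
\]
The $k$-th summand is a second difference of $u$ with increment $2^{-k}\ell$ and nodes in $B_{2^{-(n+k-1)}}(x_0)$, hence is bounded in modulus by $2^{k-1}b_{n+k-1}(2^{-k}2^{-n})^\alpha$; summing this geometric series and reindexing $j=n+k-1$ gives $\|u-P_{x_0}u\|_{L_\infty(B_{2^{-n}}(x_0))}\le C2^{-n}\sum_{j\ge n}2^{j(1-\alpha)}b_j$. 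Multiplying by $2^{n\sigma}$, summing over $n$, interchanging summation, and using $\sum_{n\le j}2^{n(\sigma-1)}\le C2^{j(\sigma-1)}$ yields $C\sum_j 2^{j(\sigma-\alpha)}b_j$, which is \eqref{eq10.32}.

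The computations are mostly routine once the scheme is set up; the two genuinely delicate points are the identification $\lim_k\nabla p_k=Du(x_0)$ in the proof of \eqref{eq8.32} — which uses $u\in C^1$ together with the a priori finiteness of the right-hand side — and the interchange of the order of summation in the two final double sums. The latter is the only place where $\sigma\in(1,2)$ (rather than a general subcritical exponent) is needed: it is exactly what makes $\sum_{k\le j}2^{k(\sigma-1)}$ comparable to $2^{j(\sigma-1)}$, so that the dyadic sums reproduce the weight $2^{j(\sigma-\alpha)}$ appearing on the right-hand sides. I would therefore regard setting up the telescoping correctly and tracking the dyadic indices through the interchange as the main step.
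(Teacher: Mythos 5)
Your proof is correct and follows essentially the same route as the paper: for \eqref{eq8.32}, choose (approximately) optimal affine fits $p_k$ on each dyadic ball, use the slope/seminorm comparability for affine functions to telescope $|\nabla p_{k+1}-\nabla p_k|$, identify $\lim_k\nabla p_k=\nabla u(x_0)$, and sum with an interchange of the order of summation using $\sigma>1$; for \eqref{eq10.32}, apply identity \eqref{eq10.22} to expand $u-P_{x_0}u$ as a series of symmetric second differences, bound each by the dyadic $\Lambda^\alpha$ seminorms, and again sum and interchange. The only cosmetic differences are that you take exact rather than approximate minimizers $p_k$ and you identify $q=\nabla u(x_0)$ via a divided-difference limit rather than via the $L_\infty$ bound $\|u-u(x_0)-q\cdot(x-x_0)\|_{L_\infty(B_{2^{-k}}(x_0))}\le C2^{-k\sigma}$; both variants are sound and the substance of the argument is the same.
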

\begin{proof}
%Without loss of generality, we assume that $x_0=0$.
Denote
$$
b_k:=2^{k(\sigma-\alpha)}
\sup_{x_0\in \bR^d}\inf_{p\in \cP_1}[u-p]_{\alpha;B_{2^{-k}}(x_0)}.
$$
Then for any $x_0\in \bR^d$ and each $k=0,1,\ldots$, there exists $p_k\in \cP_1$ such that
$$
[u-p_k]_{\alpha;B_{2^{-k}}(x_0)}\le 2b_k2^{-k(\sigma-\alpha)}.
$$
By the triangle inequality, for $k\ge 1$ we have
\begin{equation}
                                \label{eq8.45}
[p_{k-1}-p_k]_{\alpha;B_{2^{-k}}(x_0)}\le 2b_k2^{-k(\sigma-\alpha)}
+2b_{k-1}2^{-(k-1)(\sigma-\alpha)}.
\end{equation}
It is easily seen that
\begin{equation*}
[p_{k-1}-p_k]_{\alpha;B_{2^{-k}}(x_0)}=|\nabla p_{k-1}-\nabla p_k|2^{-(k-1)(1-\alpha)},
\end{equation*}
which together with \eqref{eq8.45} implies that
\begin{equation}
                            \label{eq8.47}
|\nabla p_{k-1}-\nabla p_k|\le C(b_k+b_{k-1})2^{-k(\sigma-1)}.
\end{equation}
Since $\sum_0^k b_k<\infty$, from \eqref{eq8.47} we see that $\{\nabla p_k\}$ is a Cauchy sequence in $\bR^d$. Let $q=q(x_0)\in \bR^d$ be its limit, which clearly satisfies for each $k\ge 0$,
$$
|q-\nabla p_k|\le C\sum_{j=k}^\infty 2^{-j(\sigma-1)}b_j.
$$
By the triangle inequality, we get
\begin{align}
                        \label{eq9.08}
&[u-q\cdot x]_{\alpha;B_{2^{-k}}(x_0)}
\le [u-p_k]_{\alpha;B_{2^{-k}}(x_0)}+[p_k-q\cdot x]_{\alpha;B_{2^{-k}}(x_0)}\nonumber\\
&\le C2^{-k(1-\alpha)}\sum_{j=k}^\infty 2^{-j(\sigma-1)}b_j\le C2^{-k(\sigma-\alpha)},
\end{align}
which implies that
$$
\|u-u(x_0)-q\cdot (x-x_0)\|_{L_\infty(B_{2^{-k}}(x_0))}\le C2^{-k\sigma},
$$
and thus $q=\nabla u(x_0)$. It then follows \eqref{eq9.08} that
\begin{align*}
&\sum_{k=0}^\infty 2^{k(\sigma-\alpha)}\sup_{x_0\in \bR^d}
[u-P_{x_0}u]_{\alpha;B_{2^{-k}}(x_0)}\le C\sum_{k=0}^\infty 2^{k(\sigma-1)}
\sum_{j=k}^\infty 2^{-j(\sigma-1)}b_j\\
&=C\sum_{j=0}^\infty 2^{-j(\sigma-1)}b_j
\sum_{k=0}^j 2^{k(\sigma-1)}\le C\sum_{j=0}^\infty b_j.
\end{align*}
This completes the proof of \eqref{eq8.32}.

Next we show \eqref{eq10.32}. For any $x\in B_{2^{-k}}$, it follows from \eqref{eq10.22} that for $j\ge 1$,
\begin{align*}
&u(x)-u(0)-2^j\big(u(2^{-j}x)-u(0)\big)\nonumber\\
&=\sum_{i=0}^{j-1} 2^{i}\big(u(2^{-i}x)+u(0)-2u(2^{-i-1}x)\big).
\end{align*}
Sending $j\to \infty$, we obtain
\begin{align*}
&\big|u(x)-u(0)-x\cdot \nabla u(0)\big|\le \sum_{i=0}^\infty 2^{i}\big|u(2^{-i}x)+u(0)-2u(2^{-i-1}x)\big|\\
&\le 2^{-\alpha}\sum_{i=0}^\infty 2^{i-(i+k)\alpha}[u]_{\Lambda^{\alpha}(B_{2^{-(k+i)}})}
=2^{-\alpha}\sum_{i=k}^\infty 2^{i-k-i\alpha}[u]_{\Lambda^{\alpha}(B_{2^{-i}})},
\end{align*}
where we shifted the index in the last equality.
Therefore, by shifting the coordinates and sum in $k$, we have
\begin{align*}
&\sum_{k=0}^\infty 2^{k\sigma}\sup_{x_0\in \bR^d}\|u-P_{x_0}u\|_{L_\infty(B_{2^{-k}})(x_0)}\\
&\le C \sum_{k=0}^\infty 2^{k(\sigma-1)}\sum_{i=k}^\infty 2^{i(1-\alpha)}\sup_{x_0\in \bR^d}[u]_{\Lambda^{\alpha}(B_{2^{-i}}(x_0))}\\
&= C\sum_{i=0}^\infty 2^{i(1-\alpha)}\sup_{x_0\in \bR^d}[u]_{\Lambda^{\alpha}(B_{2^{-i}}(x_0))}
\sum_{k=0}^i 2^{k(\sigma-1)}\\
&\le C\sum_{i=0}^\infty 2^{i(\sigma-\alpha)}\sup_{x_0\in \bR^d}[u]_{\Lambda^{\alpha}(B_{2^{-i}}(x_0))},
\end{align*}
where we switched the order of the summations in the second equality and in the last inequality we used the condition that $\sigma>1$.
The lemma is proved.
\end{proof}

Let $\zeta\in C_0^\infty(B_{1})$ be a nonnegative radial  function with unit integral.
For $R>0$, we define the mollification of a function $u$ by
$$
u^{(R)}(x)=\int_{\bR^{d}} u(x-Ry)\zeta(y)\,dy.
$$
The next lemmas will be used in the estimate of $M_j$ in Proposition \ref{prop1.1} when $\sigma= 1$.
\begin{lemma}
                            \label{lem2.1}
Let $\beta\in (0,1]$, $\alpha\in (0,1+\beta)$, and $0<R\le R_1<\infty$. Then for any
$u\in \Lambda^\alpha(B_{2R_1})$,
we have
\begin{equation}
                                            \label{eq11.15}
[Du^{(R)}]_{\beta;B_{R_1}}\le C(d,\beta,\alpha)
R^{\alpha-1-\beta}[u]_{\Lambda^\alpha(B_{2R_1})}.
\end{equation}
\end{lemma}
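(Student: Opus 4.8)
The plan is to transfer every derivative onto the mollifier and exploit the radial (hence even) symmetry of $\zeta$. Writing $u^{(R)}=u*\zeta_R$ with $\zeta_R(z)=R^{-d}\zeta(z/R)$, which is supported in $B_R$ and obeys $\abs{D^k\zeta_R(z)}\le C(d,k)R^{-d-k}\chi_{B_R}(z)$, we have $D^ku^{(R)}(x)=\int_{\bR^d}u(x-z)D^k\zeta_R(z)\,dz$ for $k\ge1$. The key is the bound on the second derivative: since $\int_{\bR^d}D^2\zeta_R=0$ and $D^2\zeta_R$ is even, subtracting $u(x)\int D^2\zeta_R$ and averaging the substitution $z\mapsto-z$ gives, for $x\in B_{R_1}$,
\[
D^2u^{(R)}(x)=\tfrac12\int_{\bR^d}\big(u(x+z)-2u(x)+u(x-z)\big)D^2\zeta_R(z)\,dz,
\]
and since $x,x\pm z\in B_{2R_1}$ when $\abs z\le R\le R_1$, the Zygmund-type seminorm together with $\int_{B_R}\abs z^\alpha R^{-d-2}\,dz\le CR^{\alpha-2}$ yields the core estimate $\norm{D^2u^{(R)}}_{L_\infty(B_{R_1})}\le CR^{\alpha-2}[u]_{\Lambda^\alpha(B_{2R_1})}$. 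For $\beta=1$ this is already \eqref{eq11.15}, since $[Du^{(R)}]_{1;B_{R_1}}=\norm{D^2u^{(R)}}_{L_\infty(B_{R_1})}$ on the convex ball. Moreover, comparing $u$ with $u^{(\rho)}$ by the same even-symmetrization and Taylor-expanding $u^{(\rho)}$ to first order, this core estimate also delivers the Campanato-type bound
\[
\inf_{p\in\cP_1}\norm{u-p}_{L_\infty(B_\rho(x))}\le C\rho^\alpha[u]_{\Lambda^\alpha(B_{2\rho}(x))}\qquad(\alpha\in(0,2)),
\]
which I will use below.

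Now take $\beta\in(0,1)$ and fix $x,x'\in B_{R_1}$, $h=\abs{x-x'}$. If $h\le R$, the mean value theorem and the core estimate give $\abs{Du^{(R)}(x)-Du^{(R)}(x')}\le h\norm{D^2u^{(R)}}_{L_\infty(B_{R_1})}\le CR^{\alpha-2}h[u]_{\Lambda^\alpha(B_{2R_1})}$, which is $\le CR^{\alpha-1-\beta}h^\beta[u]_{\Lambda^\alpha(B_{2R_1})}$ because $h\le R$ and $\beta\le1$. The genuinely delicate case is $h>R$. Here one cannot replace $\abs{Du^{(R)}(x)-Du^{(R)}(x')}$ by $2\norm{Du^{(R)}}_{L_\infty}$: the pointwise size of the gradient of the mollification is \emph{not} controlled by $[u]_{\Lambda^\alpha}$ (if $u$ is affine, the left-hand side of \eqref{eq11.15} vanishes while $\norm{Du^{(R)}}_{L_\infty}$ does not). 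Instead I would extract the true cancellation by comparing mollifications at different scales: with $\tilde h=\min(h,R_1/4)$,
\[
Du^{(R)}(x)-Du^{(R)}(x')=\big(Du^{(\tilde h)}(x)-Du^{(\tilde h)}(x')\big)-\int_R^{\tilde h}\partial_\rho Du^{(\rho)}(x)\,d\rho+\int_R^{\tilde h}\partial_\rho Du^{(\rho)}(x')\,d\rho .
\]

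The first term is controlled by the mean value theorem and the core estimate at scale $\tilde h$: it is $\le Ch\,\tilde h^{\alpha-2}[u]_{\Lambda^\alpha(B_{2R_1})}$. For the integrands, differentiating the scaling relation gives $\partial_\rho Du^{(\rho)}(x)=-\rho^{-2}\int_{\bR^d} u(x-\rho w)(\nabla\Psi)(w)\,dw$ with $\Psi=\nabla\cdot(w\zeta(w))$; since $\nabla\Psi$ has vanishing zeroth and first moments, subtracting the best affine approximation of $u$ on $B_{c\rho}(x)$ and invoking the Campanato bound yields $\abs{\partial_\rho Du^{(\rho)}(x)}\le C\rho^{\alpha-2}[u]_{\Lambda^\alpha(B_{2R_1})}$. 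Then $\int_R^{\tilde h}\rho^{\alpha-2}\,d\rho$ equals (up to a constant) $R^{\alpha-1}$ if $\alpha<1$, $\log(\tilde h/R)$ if $\alpha=1$, and $\tilde h^{\alpha-1}$ if $\alpha\in(1,2)$. Combining with the bound on the first term, and separating the subcases $\tilde h=h$ and $\tilde h=R_1/4$, one checks in each regime that the total is $\le CR^{\alpha-1-\beta}h^\beta[u]_{\Lambda^\alpha(B_{2R_1})}$. This is exactly where the hypothesis $\alpha<1+\beta$ (that is, $\alpha-1-\beta<0$) is used: it keeps $(h/R)^{\alpha-1-\beta}$ and $(\tilde h/R)^{\alpha-1-\beta}$ bounded, while $(R/h)^\beta\le1$ and $\log(h/R)\le C_\beta(h/R)^\beta$ dispose of the remaining regimes. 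Taking the supremum over $x\ne x'$ in $B_{R_1}$ gives \eqref{eq11.15}.

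I expect the main obstacle to be precisely this regime $h>R$: since the pointwise size of $Du^{(R)}$ is not controlled by the Zygmund-type seminorm, one is forced to compare mollifications across scales and then match the resulting power of $h/R$ against $\alpha-1-\beta$, which is the only place the restriction $\alpha<1+\beta$ is genuinely needed. The one other point requiring care is the bookkeeping with the enlarged ball $B_{2R_1}$, which stays harmless because $R\le R_1$ and $\tilde h\le R_1/4$; everything else is routine scaling.
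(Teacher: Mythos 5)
Your $\beta=1$ core estimate and your treatment of the regime $h\le R$ coincide with the paper's. For $h>R$, however, you take a genuinely different route, and your motivating remark slightly misstates the paper's strategy: the paper does \emph{not} compare mollifications across scales there. It runs an absorption argument uniformly in $h$, following the same device as in the proof of Lemma~\ref{lem2.3}: the quotient $h^{-\beta}\abs{D_\ell u^{(R)}(x)-D_\ell u^{(R)}(x')}$ is split into $I_1+I_2+I_3$ by replacing the derivatives by difference quotients at the tiny scale $\varepsilon h$; the mean-value terms $I_1,I_2$ are bounded by $2\varepsilon^\beta[Du^{(R)}]_{\beta;B_{R_1}}$ and absorbed into the left-hand side once $\varepsilon$ is fixed small; and the remaining double second-difference $I_3$ is, when $h>R$, estimated directly by $Ch^{\alpha-1-\beta}[u^{(R)}]_{\Lambda^\alpha(B_{R_1})}\le CR^{\alpha-1-\beta}[u]_{\Lambda^\alpha(B_{2R_1})}$, using the Minkowski inequality and the sign of $\alpha-1-\beta$ to pass from $h$ to $R$. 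Your alternative avoids absorption entirely: you express $Du^{(R)}(x)-Du^{(R)}(x')$ via the fundamental theorem of calculus in the mollification parameter, pass to a comparison scale $\tilde h$, compute that the kernel $\nabla\Psi$ with $\Psi=\nabla\cdot(w\zeta(w))$ governing $\partial_\rho Du^{(\rho)}$ has vanishing zeroth and first moments (this is correct: $\int\nabla\Psi=0$ by compact support and $\int w\otimes\nabla\Psi=-(\int\Psi)\,I=0$ since $\int\Psi=d\int\zeta+\int w\cdot\nabla\zeta=d-d=0$), and then invoke a Campanato-type bound. This is a continuous Littlewood--Paley style argument: longer and requiring the auxiliary Campanato estimate, but non-self-referential, and it makes the role of $\alpha<1+\beta$ transparent. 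Two points merit tightening. First, when $R>R_1/4$ one has $\tilde h<R$, so the integral orientation reverses; your three-case asymptotics for $\int_R^{\tilde h}\rho^{\alpha-2}\,d\rho$ should be stated for $\int_{\min(R,\tilde h)}^{\max(R,\tilde h)}$ — the conclusion still holds since then $\rho$, $h$, $R$ are all comparable to $R_1$. Second, the Campanato bound $\inf_{p\in\cP_1}\norm{u-p}_{L_\infty(B_\rho(x))}\le C\rho^\alpha[u]_{\Lambda^\alpha(B_{2\rho}(x))}$ with $x\in B_{R_1}$ and $\rho$ near $R_1$ pokes outside $B_{2R_1}$; fix this by using a smaller mollifier support (hence a smaller enlargement factor) in that auxiliary step, at the cost of a constant. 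Neither issue is substantive; the proof is sound with slightly incomplete bookkeeping.
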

\begin{proof}
We begin by estimating $\|D_{\ell}^2u\|_{0;B_{R_1}}$ for a fixed unit vector $\ell\in \bR^d$. Because $D_{\ell}^2\zeta$ is even with respect to $x$ and has zero integral, using integration by parts we have for any $x\in B_{R_1}$,
\begin{align*}
&|D_{\ell}^2u^{(R)}(x)|=R^{-2}\Big|\int_{\bR^{d}} u(x-Ry)D_{\ell}^2\zeta(y)\,dy\Big|\\
&=\frac {R^{-2}}2\Big|\int_{\bR^{d}} \big(u(x-Ry)+u(x+Ry)-2u(x)\big)D_{\ell}^2\zeta(y)\,dy\Big|\\
&\le CR^{\alpha-2}[u]_{\Lambda^\alpha(B_{2R_1})}\int_{\bR^{d}} |y|^\alpha D_{\ell}^2\zeta(y)\,dy\le CR^{\alpha-2}[u]_{\Lambda^\alpha(B_{2R_1})}.
\end{align*}
Using the identity, $2D_{ij}u=2D_{\ell}^2u-D_i^2 u-D_j^2 u$, where $\ell=(e_i+e_j)/\sqrt 2$, we obtain the desired inequality \eqref{eq11.15} when $\beta=1$.

Next we consider the case when $\beta\in (0,1)$. We follow the proof of Lemma \ref{lem2.3}. Let $\ell\in\bR^d$ be a unit vector, and $\varepsilon\in (0,1/16)$ be a small constant to be specified later. For any two distinct points $x,x'\in B_{R_1}$, let $h=|x-x'|(<2R_1)$. It is easily seen that there exist two points $y\in B_{\varepsilon h}(x)\cap B_{R_1}$ and $y'\in B_{\varepsilon h}(x')\cap B_{R_1}$ such that
$$
y+\varepsilon h\ell\in B_{\varepsilon h}(x)\cap B_{R_1},\quad
y'+\varepsilon h\ell\in B_{\varepsilon h}(x')\cap B_{R_1}.
$$
By the triangle inequality,
$$
h^{-\beta}|D_\ell u^{(R)}(x)-D_\ell u^{(R)}(x')|
\le I_1+I_2+I_3,
$$
where
\begin{align*}
I_1&=h^{-\beta}|D_\ell u^{(R)}(x)-(\varepsilon h)^{-1}(u^{(R)}(y+\varepsilon h\ell)-u^{(R)}(y))|,\\
I_2&=h^{-\beta}|D_\ell u^{(R)}(x')-(\varepsilon h)^{-1}(u^{(R)}(y'+\varepsilon h\ell)-u^{(R)}(y'))|,\\
I_3&=h^{-\beta}(\varepsilon h)^{-1}|(u^{(R)}(y+\varepsilon h\ell)-u^{(R)}(y))-(u^{(R)}(y'+\varepsilon h\ell)-u^{(R)}(y'))|.
\end{align*}
By the mean value theorem,
\begin{equation}
                                    \label{eq3.02}
I_1+I_2\le 2\varepsilon^\beta [Du^{(R)}]_{\beta;B_{R_1}}.
\end{equation}
Now we choose $\varepsilon$ depending only on $d$ and $\beta$ such that $2\varepsilon^\beta\le 1/2$.
To estimate $I_3$, we consider two cases.
If $h>R$, by the triangle inequality, we have
\begin{align*}
I_3 &\le Ch^{-1-\beta}\big(|u^{(R)}(y+\varepsilon h\ell)+u^{(R)}(y')-2u^{(R)}(\bar y)|\\
&\quad+|u^{(R)}(y'+\varepsilon h\ell)+u^{(R)}(y)-2u^{(R)}(\bar y)|\big),
\end{align*}
where $\bar y=(y+\varepsilon h\ell+y')/2$.
Then by the Minkowski inequality,
\begin{equation}
                                        \label{eq3.03}
I_3\le Ch^{\alpha-1-\beta}[u^{(R)}]_{\Lambda^\alpha(B_{R_1})}\le CR^{\alpha-1-\beta}[u]_{\Lambda^\alpha(B_{2R_1})}.
\end{equation}
On the other hand, if $h\in (0,R)$, by the mean value theorem and \eqref{eq11.15} with $\beta=1$,
\begin{equation}
                                \label{eq3.04}
I_3
\le Ch^{1-\beta}[D u^{(R)}]_{1;B_{R_1}}\le Ch^{1-\beta}R^{\alpha-2}[u]_{\Lambda^\alpha(B_{2R_1})}\le CR^{\alpha-1-\beta}[u]_{\Lambda^\alpha(B_{2R_1})}.
\end{equation}
Combining \eqref{eq3.02}, \eqref{eq3.03}, and \eqref{eq3.04}, we obtain
$$
h^{-\beta}|D_\ell u^{(R)}(x)-D_\ell u^{(R)}(x')|\le \frac 1 2 [Du^{(R)}]_{\beta;B_{R_1}}+CR^{\alpha-1-\beta}[u]_{\Lambda^\alpha(B_{2R_1})}.
$$
Taking the supremum of the left-hand side above with respect to unit vector $\ell\in \bR^d$ and $x,x'\in B_{R_1}$, we immediately get \eqref{eq11.15}. The lemma is proved.
\end{proof}

\begin{lemma}
                                \label{lem2.2}
Let $\alpha\in (0,1)$, $\beta\in (0,1)$, and $R>0$ be constants. Let $p=p(x)$ be the first-order Taylor expansion of $u^{(R)}$ at the origin and $\tilde u=u-p$.
Then for any integer $j\ge 0$, we have
\begin{align}
                                            \label{eq3.51}
&\|\tilde u\|_{L_\infty(B_{2^{j+1} R})}\le C2^{j(1+\beta)}R^\alpha[u]_{\Lambda^\alpha(B_{2^{j+2} R})},\\
&\sup_{\substack{x,x'\in B_{2^jR}\\0<|x-x'|<2R}} \frac{|\tilde u(x)-\tilde u(x')|}{|x-x'|^\alpha}
\le C2^{j\beta}[u]_{\Lambda^\alpha(B_{2^{j+2} R})}, \label{eq 12.62b}
\end{align}
where $C>0$ is a constant depending only on $d$, $\beta$, and $\alpha$.
\end{lemma}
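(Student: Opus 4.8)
The plan is to estimate everything through the mollification of $\tilde u$, using two structural facts: (a) $\tilde u$ differs from $u$ by an affine function, so $[\tilde u]_{\Lambda^\alpha(B_\rho(x_0))}=[u]_{\Lambda^\alpha(B_\rho(x_0))}$; (b) by the choice of $p$, the function $g:=u^{(R)}-p=\tilde u^{(R)}$ satisfies $g(0)=0$ and $Dg(0)=0$. I will use repeatedly the following consequences of $\zeta$ being radial with $\int\zeta=1$. For continuous $f$ and $s>0$,
\[
|f(z)-f^{(s)}(z)|=\tfrac12\Big|\int_{\bR^d}\big(2f(z)-f(z+sy)-f(z-sy)\big)\zeta(y)\,dy\Big|\le Cs^\alpha[f]_{\Lambda^\alpha(B_s(z))}.
\]
Setting $\eta:=\zeta-2^{-d}\zeta(\cdot/2)$ (radial, supported in $B_2$, with $\int\eta=\int y\,\eta=0$) and $\eta_t:=t^{-d}\eta(\cdot/t)$, the same second-difference computation applied to $\eta_t$ and to $D^2\eta_t$ gives $\|u*\eta_t\|_{L_\infty(B)}\le C[u]_{\Lambda^\alpha(B')}t^\alpha$ and $\|D^2(u*\eta_t)\|_{L_\infty(B)}\le C[u]_{\Lambda^\alpha(B')}t^{\alpha-2}$, whence, by the interpolation inequality $\|Dh\|_{L_\infty(B_t)}\le C\big(t^{-1}\|h\|_{L_\infty(B_{2t})}+t\|D^2 h\|_{L_\infty(B_{2t})}\big)$,
\[
\|D(u*\eta_t)\|_{L_\infty(B_t(z))}\le C[u]_{\Lambda^\alpha(B_{Ct}(z))}\,t^{\alpha-1}.
\]
All balls appearing below will, after a harmless enlargement, lie inside $B_{2^{j+2}R}$ (using $\mathrm{supp}\,\zeta\subset B_1$, $|x-x'|<2R$, and telescoping only down to scales $\lesssim R$); I will not track this bookkeeping.

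For \eqref{eq3.51}, write $\tilde u=(u-u^{(R)})+g$ on $B_{2^{j+1}R}$. The first term is $\le CR^\alpha[u]_{\Lambda^\alpha(B_{2^{j+2}R})}$ by the mollification-error estimate with $s=R$. For $g$, since $Dg=Du^{(R)}-Du^{(R)}(0)$ and $Dg(0)=0$, Lemma \ref{lem2.1} with $R_1=2^{j+1}R$ gives $[Du^{(R)}]_{\beta;B_{2^{j+1}R}}\le CR^{\alpha-1-\beta}[u]_{\Lambda^\alpha(B_{2^{j+2}R})}$, so for $x\in B_{2^{j+1}R}$,
\[
|Dg(x)|\le|x|^\beta[Du^{(R)}]_{\beta;B_{2^{j+1}R}}\le C2^{j\beta}R^{\alpha-1}[u]_{\Lambda^\alpha(B_{2^{j+2}R})},
\]
and hence $|g(x)|=|g(x)-g(0)|\le|x|\sup_{[0,x]}|Dg|\le C2^{j(1+\beta)}R^\alpha[u]_{\Lambda^\alpha(B_{2^{j+2}R})}$. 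Adding the two terms yields \eqref{eq3.51}.

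For \eqref{eq 12.62b}, fix $x,x'\in B_{2^jR}$ and put $s=|x-x'|\in(0,2R)$. Mollify at scale $s$:
\[
\tilde u(x)-\tilde u(x')=\big([\tilde u-\tilde u^{(s)}](x)-[\tilde u-\tilde u^{(s)}](x')\big)+\big(\tilde u^{(s)}(x)-\tilde u^{(s)}(x')\big).
\]
By (a) and the mollification-error estimate, the first bracket is $\le Cs^\alpha[u]_{\Lambda^\alpha(B_{2^{j+2}R})}$. For the second, $|\tilde u^{(s)}(x)-\tilde u^{(s)}(x')|\le s\,\|D\tilde u^{(s)}\|_{L_\infty(B_{2^jR})}$, and since $\tilde u^{(s)}=u^{(s)}-p$ we have $D\tilde u^{(s)}=\big(Du^{(s)}-Du^{(R)}\big)+\big(Du^{(R)}-Du^{(R)}(0)\big)$. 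The second summand is $\le C2^{j\beta}R^{\alpha-1}[u]_{\Lambda^\alpha(B_{2^{j+2}R})}$ exactly as in the proof of \eqref{eq3.51}. For the first, write $Du^{(s)}-Du^{(R)}=\sum_t D(u*\eta_t)$ over dyadic scales $t$ with $s\le t\lesssim R$; by the gradient estimate above, $\|Du^{(s)}-Du^{(R)}\|_{L_\infty(B_{2^jR})}\le C[u]_{\Lambda^\alpha(B_{2^{j+2}R})}\sum_{t\ge s,\ \mathrm{dyadic}}t^{\alpha-1}\le Cs^{\alpha-1}[u]_{\Lambda^\alpha(B_{2^{j+2}R})}$, the geometric sum being dominated by its smallest term since $\alpha<1$ (when $s$ is already comparable to $R$ a single interpolation step replaces the sum). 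Therefore $s\,\|D\tilde u^{(s)}\|_{L_\infty}\le C\big(2^{j\beta}R^{\alpha-1}s+s^\alpha\big)[u]_{\Lambda^\alpha(B_{2^{j+2}R})}\le C2^{j\beta}s^\alpha[u]_{\Lambda^\alpha(B_{2^{j+2}R})}$, where we used $R^{\alpha-1}s\le Cs^\alpha$ (valid because $s<2R$). Combining, $|\tilde u(x)-\tilde u(x')|\le C2^{j\beta}s^\alpha[u]_{\Lambda^\alpha(B_{2^{j+2}R})}$, i.e. \eqref{eq 12.62b}.

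The main obstacle is conceptual: $[u]_{\Lambda^\alpha(B_{2^{j+2}R})}$ does not control first-order differences of $u$ on its own — an affine function has vanishing $\Lambda^\alpha$ seminorm — so one cannot bound $[\tilde u]_{\alpha;B_{2^jR}}$ directly by $[u]_{\Lambda^\alpha(B_{2^{j+2}R})}$; subtracting the first-order Taylor polynomial $p$ of $u^{(R)}$ is precisely what removes this affine ambiguity and makes the $L_\infty$ control \eqref{eq3.51} available in the proof of \eqref{eq 12.62b}. The one genuinely technical step is the dyadic-telescoping estimate that upgrades second-difference (Zygmund) information to the gradient bound $\|D(u*\eta_t)\|_{L_\infty(B_t)}\le C[u]_{\Lambda^\alpha}t^{\alpha-1}$ at the natural scale $t$; this is where Lemma \ref{lem2.1} (the exponent $\beta<1$) and the hypothesis $\alpha<1$ (so that $\sum_{t\ge s}t^{\alpha-1}\le Cs^{\alpha-1}$) enter, together with the routine tracking of the nested balls.
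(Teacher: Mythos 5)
Your proof is correct. For \eqref{eq3.51} you follow essentially the same route as the paper: split $\tilde u = (u - u^{(R)}) + (u^{(R)} - p)$, bound the first term by the standard radial-mollifier second-difference estimate, and bound the second term by a Taylor-remainder argument using the $[Du^{(R)}]_{\beta}$ bound from Lemma \ref{lem2.1}. (The paper bounds $|u^{(R)}(x)-p(x)|\le C|x|^{1+\beta}[Du^{(R)}]_{\beta}$ directly; you pass through $|Dg(x)|\le|x|^{\beta}[Du^{(R)}]_{\beta}$ and integrate, which loses nothing here.)

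For \eqref{eq 12.62b} your route is genuinely different. The paper applies the dyadic telescoping identity \eqref{eq10.22} with step $\ell = x'-x$, choosing $k$ so that the endpoint $x+2^k(x'-x)$ lands near $\partial B_{2^{j+1}R}$; this expresses $\tilde u(x')-\tilde u(x)$ as one coarse-scale increment, controlled by $2^{-k}\|\tilde u\|_{L_\infty}$ and hence by \eqref{eq3.51}, plus a geometric sum of second differences, controlled by $h^{\alpha}[u]_{\Lambda^{\alpha}}$. This is short and reuses \eqref{eq3.51} directly. You instead mollify at scale $s = |x-x'|$ and split into a mollification-error term (bounded by $s^{\alpha}[u]_{\Lambda^{\alpha}}$) and a mean-value-theorem term, which requires the gradient bound $\|Du^{(s)}-Du^{(R)}(0)\|_{L_\infty(B_{2^jR})}\le C(s^{\alpha-1}+2^{j\beta}R^{\alpha-1})[u]_{\Lambda^{\alpha}}$. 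To get the $s^{\alpha-1}$ part you introduce the Littlewood--Paley kernel $\eta=\zeta-2^{-d}\zeta(\cdot/2)$ and prove $\|D(u*\eta_t)\|_{L_\infty}\le Ct^{\alpha-1}[u]_{\Lambda^{\alpha}}$ by interpolating the $t^{\alpha}$ and $t^{\alpha-2}$ bounds that follow from $\int\eta=\int y\,\eta=0$; summing the dyadic series uses $\alpha<1$. This is more machinery than the paper deploys, but it is self-contained and isolates a multiscale gradient estimate that is of some independent interest; the paper's argument is tighter because identity \eqref{eq10.22} is already in place for other purposes. Both proofs rely on the affine normalization $p$ in the same essential way, exactly as you note: $[u]_{\Lambda^{\alpha}}$ sees only second differences, so subtracting $p$ is what makes the first-order control accessible.
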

\begin{proof}
Since $\zeta\in C_0^\infty(B_1)$ is radial and has unit integral,
we have for any $x\in B_{2^{j+1} R}$,
\begin{align}
&\big|u^{(R)}(x)-u(x)\big|\nonumber\\
&=\big|\frac 1 2 \int_{\bR^d} \big(u(x+Ry)-u(x-Ry)
-2u(x)\big)\zeta(y)\,dy\Big|\le CR^\alpha[u]_{\Lambda^\alpha(B_{2^{j+2}R})}.
                    \label{eq10.31b}
\end{align}
By the mean value theorem and Lemma \ref{lem2.1}, for any $x\in B_{2^{j+1}R}$,
\begin{equation*}
\big|u^{(R)}(x)-p(x)\big|
\le C(2^{j+1}R)^{1+\beta}[u^{(R)}]_{1+\beta;B_{2^{j+1}R}}
\le C2^{j(1+\beta)}R^\alpha[u]_{\Lambda^\alpha(B_{2^{j+2}R})},
\end{equation*}
which together with \eqref{eq10.31b} implies \eqref{eq3.51}. Next we show \eqref{eq 12.62b}. For any two distinct points $x,x'\in B_{2^j R}$ satisfying $0<|x-x'|<2R$, denote $h=|x-x'|(<2R)$. Let $k$ be the largest nonnegative integer such that $2^k (x'-x)+x\in B_{2^{j+1}R}$. Clearly,
\begin{equation}
                                    \label{eq4.26}
2^k h\in (2^{j-1}R,2^{j+2}R).
\end{equation}
It follows from \eqref{eq10.22} that
\begin{align}
                            \label{eq10.12}
&\tilde u(x')-\tilde u(x)=
2^{-k}
\big(\tilde u(2^k (x'-x)+x)-\tilde u(x)\big)\nonumber\\
&\quad+\sum_{i=0}^{k-1} 2^{-i-1}
\big(2\tilde u(2^i (x'-x)+x)-\tilde u(x)-\tilde u(2^{i+1} (x'-x)+x)\big).
\end{align}
By \eqref{eq4.26}, \eqref{eq10.12}, and \eqref{eq3.51},
we obtain
\begin{align*}
h^{-\alpha}|\tilde u(x')-\tilde u(x)|
&\le 2^{-k+1}h^{-\alpha}\|\tilde u\|_{L_\infty(B_{2^{j+1} R})}+C[u]_{\Lambda^\alpha(B_{2^{j+1} R})}\\
&\le C2^{-j}R^{-1} h^{1-\alpha}\cdot 2^{j(1+\beta)}R^\alpha[u]_{\Lambda^\alpha(B_{2^{j+2} R})}
+C[u]_{\Lambda^\alpha(B_{2^{j+1} R})}\\
&\le C2^{j\beta}[u]_{\Lambda^\alpha(B_{2^{j+2} R})},
\end{align*}
where we used $h<2R$ in the last inequality. The lemma is proved.
\end{proof}

\section{Proofs}

The following proposition is a further refinement of \cite[Corollary 4.6]{DZ16}.
\begin{proposition}\label{prop1.1}
Let $\sigma\in (0,2)$ and $0<\lambda\le \Lambda$. Assume that for any $\beta\in\mathcal{A}$, $K_{\beta}$ only depends on $y$. There is a constant $\hat{\alpha}\in (0,1)$ depending on $d,\sigma,\lambda$, and $\Lambda$ so that the following holds. Let $\alpha\in (0,\hat{\alpha})$. % and  $[\sigma+\alpha]<\sigma+\alpha^\prime$.
Suppose $u\in C^{\sigma+\alpha}(B_1)\cap C^{\alpha}(\bR^d)$ is a solution of
\begin{equation*}
\inf_{\beta\in \mathcal{A}}(L_{\beta} u+f_{\beta})=0\quad \text{in}\,\, B_1.%\label{eq 1124.1}
\end{equation*}
Then,
\begin{equation*}
[u]_{\alpha+\sigma;B_{1/2}}\le %C\|u\|_{\alpha;B_2}
C\sum_{j=1}^\infty 2^{-j\sigma}M_j
+C\sup_{\beta}[f_{\beta}]_{\alpha; B_{1}},
                        %\label{eq1.04}
\end{equation*}
where
$$
M_j=\sup_{x,x'\in B_{2^j},0<|x-x'|<2}\frac{|u(x)-u(x')|}{|x-x'|^\alpha}.
$$
\end{proposition}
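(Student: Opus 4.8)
The plan is to obtain the estimate from the interior $C^{\sigma+\alpha}$ bound for translation-invariant operators in \cite[Corollary 4.6]{DZ16} by examining precisely how the values of $u$ outside $B_1$ enter that argument. A close reading of that proof shows that this dependence occurs only through increments of the form
\[
\cI_\beta(x,x')=\int_{\{|y|>1\}}\big(\delta u(x,y)-\delta u(x',y)\big)K_\beta(y)\,dy,
\]
with $x,x'\in B_{1/2}$ and $h:=|x-x'|$ small, together with lower-order norms of $u$ on $B_1$ and, when $\sigma\ge 1$, a polynomial that gets subtracted; all the remaining ingredients — the Evans--Krylov step, the comparison with limiting equations — are local. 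Once $u$ is normalized by subtracting a constant (and, when $\sigma>1$, an affine function), which changes neither $[u]_{\sigma+\alpha;B_{1/2}}$ nor the equation, those lower-order norms are controlled by $C[u]_{\alpha;B_2}\le CM_1$. Thus the heart of the matter is to show
\[
|\cI_\beta(x,x')|\le Ch^\alpha\sum_{j=1}^\infty 2^{-j\sigma}M_j
\]
uniformly in $\beta$.

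I would prove this by splitting $\{|y|>1\}=\bigcup_{j\ge1}A_j$ with $A_j=B_{2^j}\setminus B_{2^{j-1}}$, on which $|K_\beta(y)|\le\Lambda|y|^{-d-\sigma}$ and $\int_{A_j}|y|^{-d-\sigma}\,dy\simeq2^{-j\sigma}$. When $\sigma\in(0,1)$, writing $x'=x+h\ell$ with $|\ell|=1$, one has the elementary second-difference identity
\[
\delta u(x,y)-\delta u(x',y)=\big(u(x+h\ell)-u(x)\big)-\big(u(x+y+h\ell)-u(x+y)\big);
\]
the first increment is $\le h^\alpha M_1$, and the second, since $x+y$ and $x+y+h\ell$ lie in $B_{C2^j}$ at mutual distance $h<2$, is $\le Ch^\alpha M_{j}$ (up to harmlessly replacing $j$ by $j+1$), so integrating over $A_j$ and summing the geometric series gives the claimed bound. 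When $\sigma\in[1,2)$ the same device handles the increment $u(x+y)-u(x)$ inside $\delta u$, while the correction term is dealt with as in \cite{DZ16}: for $\sigma>1$ one passes to $u$ minus its first-order Taylor polynomial at a reference point and uses $\sum_j2^{j(1-\sigma)}<\infty$, and for $\sigma=1$ the correction $-y\cdot Du\,\chi_{B_1}$ is absent on $\{|y|>1\}$ and is annihilated on $\{|y|\le1\}$ by \eqref{eq10.58}. In the borderline case $\sigma=1$ a carefully chosen affine function must be subtracted before the shell estimate, and the mollifications $u^{(R)}$ of Lemma \ref{lem2.1} together with the dyadic-ball estimates for $u-p$ of Lemma \ref{lem2.2} supply exactly the $M_j$-type bounds this needs. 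Feeding these estimates into the argument of \cite[Corollary 4.6]{DZ16} yields the proposition; the series converges because $M_j\le[u]_{\alpha;\bR^d}<\infty$ for every $j$.

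The step I expect to be the main obstacle is this accounting of the powers of $2^j$ on each annulus. Telescoping an increment of $u$ over distance $2^j$ crudely into unit increments would lose a factor $2^j$ per annulus and give the far weaker weight $2^{j(1-\sigma)}M_j$, which for $\sigma\le1$ is not even summable; the decisive observation is that the far field appears only through the \emph{difference} $\cI_\beta(x,x')$ — hence through a genuine second difference of $u$, each of whose two halves is controlled by $M_j$ at the scale $2^j$ — so that only the kernel decay $2^{-j\sigma}$ is charged on the shell $A_j$. A secondary, more delicate point is the case $\sigma=1$, where the affine function to be subtracted must be chosen coherently across all scales, which is the purpose of the mollification argument and of Lemmas \ref{lem2.1} and \ref{lem2.2}, together with the routine check that every constant above depends on $\beta\in\cA$ only through $d$, $\sigma$, $\lambda$, and $\Lambda$.
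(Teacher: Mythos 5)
Your proposal reconstructs, in more detail, the same refinement the paper invokes tersely from \cite[Corollary 4.6]{DZ16}: the far-field tail over $\{|y|>1\}$ involves second differences of $u$ at fixed distance $h=|x-x'|<2$ on the shell $B_{2^j}$, hence is bounded by $h^\alpha M_j$ rather than by the full $[u]_{\alpha;B_{2^j}}$, while the local lower-order norm is normalized away by subtracting $u(0)$. One small misattribution: Lemmas \ref{lem2.1} and \ref{lem2.2} play no role inside the proof of Proposition \ref{prop1.1} itself (when $\sigma=1$ the gradient correction is cut off outside $B_1$ and annihilated inside $B_1$ by \eqref{eq10.58}, so the far-field estimate is the same as for $\sigma<1$); those lemmas are used downstream, in Case 3 of the proof of Proposition \ref{prop3.2}, to convert the resulting $M_j$ into the Campanato-type quantity $\inf_{p\in\cP_1}[u-p]_{\alpha;B_{2^{j}}}$.
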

\begin{proof}
This follows from the proof of \cite[Corollary 4.6]{DZ16}
by observing that in the estimate of $[h_{\beta}]_{\alpha;B_1}$, the term
$[u]_{\alpha; B_{2^{j}}}$ can be replaced by $M_j$. Moreover, by replacing $u$ by $u-u(0)$, we see that
$$
\|u\|_{\alpha;B_2}\le C[u]_{\alpha;B_2}.
$$
The lemma is proved.
\end{proof}

\begin{proposition}\label{prop3.2}
Suppose that \eqref{eq 1} is satisfied in $\bR^d$. Then under the conditions of Theorem \ref{thm 1}, we have
\begin{equation}
                                        \label{eq8.16}
[u]_\sigma\le C\|u\|_{L_\infty}+C
\sum_{k=1}^\infty \omega_f(2^{-k}),
\end{equation}
where $C>0$ is a constant depending only on $d$, $\lambda$, $\Lambda$, $\omega_a$, and $\sigma$.
\end{proposition}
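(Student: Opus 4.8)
The plan is to control, when $\sigma\neq 1$, the series $\sum_{k\ge 0}\cS_k$ with $\cS_k:=2^{k(\sigma-\alpha)}Q_k$ and $Q_k:=\sup_{x_0\in\bR^d}[u]_{\Lambda^{\alpha}(B_{2^{-k}}(x_0))}$, where $\alpha\in(0,\hat\alpha)$ is fixed ($\hat\alpha$ from Proposition \ref{prop1.1}, and $\alpha<1-\sigma$ when $\sigma<1$). Since $\Lambda^\alpha$ semi-norms are unchanged by subtracting affine functions, $Q_k$ dominates the quantities appearing in Lemmas \ref{lem2.3} and \ref{lem2.4}, so that $[u]_\sigma\le C\sup_k\cS_k\le C\sum_k\cS_k$ (for $\sigma=1$ the corresponding object is the series in \eqref{eq10.10} controlling $\|Du\|_{L_\infty}$); it therefore suffices to bound $\sum_k\cS_k$ by $C\|u\|_{L_\infty}+C\sum_k\omega_f(2^{-k})$. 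Fix $x_0$, translated to $0$, and a scale $2^{-k}$. Decompose $L_\beta=\hat L_\beta+(L_\beta-\hat L_\beta)$ with $\hat L_\beta$ the frozen operator having the $y$-only kernel $a_\beta(0,y)|y|^{-d-\sigma}$, and let $v_k$ solve $\inf_\beta(\hat L_\beta v_k+f_\beta(0))=0$ in $B_{2^{-k}}(0)$ with $v_k=u$ outside. As $v_k$ solves a translation invariant concave equation with $C^\alpha$ (in fact constant) right-hand side, Proposition \ref{prop1.1}, applied after rescaling $B_{2^{-k}}(0)$ to $B_1$, controls $[v_k]_{\sigma+\alpha;B_{2^{-k-1}}(0)}$ by the relevant quantities $M_j$; rewriting a first difference over distance $\sim 2^{-k}$ as a convergent series of second differences at coarser scales via \eqref{eq10.22} (subtracting the Taylor polynomial and invoking Lemma \ref{lem2.4} when $\sigma\ge 1$) turns this into $C2^{-k\alpha}\sum_{i\ge0}2^{-i(1-\alpha)}Q_{k-1-i}$, so that
$$[v_k]_{\Lambda^\alpha(B_{2^{-k-1}}(0))}\le C2^{-k\alpha}\sum_{i\ge0}2^{-i(1-\alpha)}Q_{k-1-i}.$$
The factor $2^{-k\alpha}$ is the gain coming from the extra H\"older regularity $\alpha$ of the frozen solution.

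On the other hand $w:=u-v_k$ vanishes outside $B_{2^{-k}}(0)$ and satisfies there the extremal inequalities $\cM^-w\le\cE_k$, $\cM^+w\ge-\cE_k$ for the Pucci operators associated with the class $\{a|y|^{-d-\sigma}:\lambda\le a\le\Lambda\}$, where $\cE_k:=\sup_\beta\big(\|(L_\beta-\hat L_\beta)u\|_{L_\infty(B_{2^{-k}}(0))}+\omega_f(2^{-k})\big)$. The maximum principle gives $\|w\|_{L_\infty}\le C2^{-k\sigma}\cE_k$ and the $C^\alpha$ estimate for $\cM^\pm$ gives $[w]_{\alpha;B_{2^{-k-1}}(0)}\le C2^{-k(\sigma-\alpha)}\cE_k$. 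Combining with the bound on $[v_k]_{\Lambda^\alpha}$ and taking the supremum over $x_0$ yields the recursion
$$\cS_{k+1}\le C2^{-k\alpha}\sum_{i\ge0}2^{i(\sigma-1)}\cS_{k-1-i}+C\cE_k.$$

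It remains to estimate $\cE_k$ and sum. Since $|a_\beta(z,y)-a_\beta(0,y)|\le\Lambda\omega_a(2^{-k})$ for $z\in B_{2^{-k}}(0)$, one has $\|(L_\beta-\hat L_\beta)u\|_{L_\infty(B_{2^{-k}}(0))}\le\Lambda\omega_a(2^{-k})\sup_z\int|\delta u(z,y)|\,|y|^{-d-\sigma}\,dy$; cutting the $y$-integral into dyadic annuli, estimating the contribution of $\{|y|\sim 2^{-m}\}$ by \eqref{eq10.22} for $m\le k$ and by the a priori smoothness $u\in C^{\sigma^+}$ together with $\|u\|_{L_\infty}$ for $m>k$ (which yields a term decaying in $k$), gives
$$\cE_k\le C\omega_f(2^{-k})+C\omega_a(2^{-k})\Big(\|u\|_{L_\infty}+\sum_{m\le k}\cS_m\Big)+C2^{k(\sigma-1)}\|u\|_{L_\infty}.$$
Summing the recursion over $k$, the $i$-series converges because $\sigma<1$ (respectively, the analogous weight arising from Lemma \ref{lem2.4} is summable when $\sigma>1$); the double sum collapses to $C\sum_l 2^{-l\alpha}\cS_l$; and every term of the type $\big(\sum_{k\ge m}\omega_a(2^{-k})\big)\cS_m$ or $2^{-m\alpha}\cS_m$ with $m$ large is absorbed into $\sum_k\cS_k$ because the Dini tail $\sum_{k\ge k_0}\omega_a(2^{-k})$ and the geometric tail $\sum_{k\ge k_0}2^{-k\alpha}$ are small for $k_0$ large, while the remaining finitely many large-scale terms are bounded by $\|u\|_{L_\infty}$ (equivalently by the weighted $L^1$ norm of $u$) using a crude interior estimate. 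What survives is exactly $C\|u\|_{L_\infty}+C\sum_k\omega_f(2^{-k})$. When $\sigma=1$ the same scheme is run, except that at scale $2^{-k}$ one subtracts the first-order Taylor polynomial of the mollification $u^{(2^{-k})}$ and uses Lemmas \ref{lem2.1} and \ref{lem2.2} to estimate $u$ minus this polynomial on each annulus; this supplies the missing annular bounds and simultaneously the modulus of continuity of $Du$.

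The step I expect to be the main obstacle is the closing of the iteration, and it has two facets. First, one must show that the freezing error $(L_\beta-\hat L_\beta)u$ — a nonlocal operator evaluated at the merely $C^{\sigma^+}$ solution — is controlled annulus by annulus purely in terms of the lower-order semi-norms $\cS_m$ at the matching scales plus $\|u\|_{L_\infty}$; this is precisely where dividing the kernel integral into annuli and using \eqref{eq10.22} is indispensable, and it is the reason one must work with the series $\sum_k\cS_k$ rather than with $[u]_\sigma$ itself. Second, one must sum and absorb the resulting linear recursion for $\{\cS_k\}$, which is possible only through the interplay of the $2^{-k\alpha}$ gain with the smallness of the Dini tails of $\omega_a$. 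The case $\sigma\ge 1$ is genuinely harder, since the bare $\Lambda^\alpha$ series no longer reconstructs $[u]_\sigma$, forcing the polynomial subtraction of Lemma \ref{lem2.4}; and $\sigma=1$ is the most delicate, as there the polynomial must be extracted through the mollification argument of Lemmas \ref{lem2.1}--\ref{lem2.2}.
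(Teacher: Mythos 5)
Your proposal is correct and takes essentially the same route as the paper: freeze the coefficients, apply the refined interior estimate (Proposition \ref{prop1.1}) to the frozen solution, control $w=u-v$ by Pucci extremal estimates and the maximum principle, decompose the freezing error over dyadic annuli in terms of a weighted series of low-order semi-norms, and close the resulting recursion by summation and absorption via the Dini tail of $\omega_a$, together with the polynomial subtraction of Lemma \ref{lem2.4} for $\sigma>1$ and the mollification device of Lemmas \ref{lem2.1}--\ref{lem2.2} for $\sigma=1$. Two small points the paper handles more carefully: the geometric smallness used for absorption comes from a second free scale parameter $k_0$ (passing from $B_{2^{-k}}$ to $B_{2^{-k-k_0}}$ gives a uniform factor $2^{-k_0\alpha}$, whereas your factor $2^{-k\alpha}$ is not small for small $k$ and the displayed bound on $[v_k]_{\Lambda^\alpha}$ is dimensionally off), and for $\sigma>1$ one must truncate the boundary data $u-p_0$ to $g_M=\max(\min(u-p_0,M),-M)$ before invoking the comparison principle (sending $M\to\infty$ afterward), with the freezing error then carrying $\|Du\|_{L_\infty}$ rather than $\|u\|_{L_\infty}$, absorbed by interpolation.
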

\begin{proof}
{\bf Case 1: $\sigma\in (0,1)$.} For $k\in \bN$, let $v$ be the solution of
\begin{align}\label{eq 12.181}
\begin{cases}
\inf_{\beta\in\mathcal{A}}\big(L_{\beta}(0)v+f_{\beta}(0)\big)=0\quad &\text{in}\,\, B_{2^{-k}}\\
v=u\quad &\text{in}\,\,B_{2^{-k}}^c
\end{cases},
\end{align}
where $L_{\beta}(0)$ is the operator with kernel $K_{\beta}(0,y)$. Then by Proposition \ref{prop1.1} with scaling, we have
\begin{align}
&[v]_{\alpha+\sigma;B_{2^{-k-1}}}\le %C\|u\|_{\alpha;B_2}
C\sum_{j=1}^\infty 2^{(k-j)\sigma}M_j+C2^{k\sigma}[v]_{\alpha;B_{2^{-k}}}
\nonumber\\
&\le C\sum_{j=1}^k 2^{(k-j)\sigma}M_j+C[u]_{\alpha}+C2^{k\sigma}[v]_{\alpha;B_{2^{-k}}},
                        \label{eq1.04}
\end{align}
where $\alpha\in (0,\hat\alpha)$ satisfying $\sigma+\alpha<1$ and
$$
M_j=\sup_{x,x'\in B_{2^{j-k}},0<|x-x'|<2^{-k+1}}\frac{|u(x)-u(x')|}{|x-x'|^\alpha}.
$$
Let $k_0,k_1\ge 1$ be integers to be specified. From \eqref{eq1.04}, we get
\begin{align}
[v]_{\alpha;B_{2^{-k-k_0}}}\le
C2^{-(k+k_0)\sigma}\sum_{j=1}^k 2^{(k-j)\sigma}M_j+C2^{-(k+k_0)\sigma}[u]_{\alpha}
+C2^{-k_0\sigma}[v]_{\alpha;B_{2^{-k}}}.
                        \label{eq1.04b}
\end{align}

Next, $w:=u-v$ satisfies
\begin{equation}
                                            \label{eq4.25}
\begin{cases}
 \cM^+w\ge -C_k\quad &\text{in}\,\, B_{2^{-k}},\\
 \cM^-w\le C_k\quad &\text{in}\,\, B_{2^{-k}},\\
 w=0\quad &\text{in}\,\, B_{2^{-k}}^c,
\end{cases}
\end{equation}
where
$$
C_k=\sup_{\beta\in \cA}\|f_{\beta}-f_{\beta}(0)+(L_{\beta}-L_{\beta}(0))u\|_{L_\infty(B_{2^{-k}})}.
$$
It is easily seen that
\begin{align*}
C_k&\le \omega_f(2^{-k})+C\omega_a(2^{-k})\int_{\bR^d}|u(x+y)-u(x)||y|^{-d-\sigma}\,dy\\
&\le \omega_f(2^{-k})+C\omega_a(2^{-k})\Big(\sup_{x_0\in B_{2^{-k}}}\sum_{j=0}^\infty 2^{j(\sigma-\alpha)} [u]_{\alpha;B_{2^{-j}}(x_0)}+\|u\|_{L_\infty}\Big).
\end{align*}
Then by the H\"older estimate \cite[Lemma 2.5]{DZ16}, we have
\begin{align}
                                \label{eq3.05}
&[w]_{\alpha;B_{2^{-k}}}\le C2^{-k(\sigma-\alpha)}C_k\nonumber\\
&\le C2^{-k(\sigma-\alpha)}\Big(\omega_f(2^{-k})
+\omega_a(2^{-k})\big(\sup_{x_0\in B_{2^{-k}}}\sum_{j=0}^\infty 2^{j(\sigma-\alpha)} [u]_{\alpha;B_{2^{-j}}(x_0)}+\|u\|_{L_\infty}\big)\Big).
\end{align}
Combining \eqref{eq1.04b} and \eqref{eq3.05} yields
\begin{align}
                            \label{eq3.25}
&2^{(k+k_0)(\sigma-\alpha)}[u]_{\alpha;B_{2^{-k-k_0}}}\nonumber\\
&\le
C2^{-(k+k_0)\alpha}\sum_{j=1}^k 2^{(k-j)\sigma}[u]_{\alpha;B_{2^{j-k}}}+C2^{-(k+k_0)\alpha}[u]_{\alpha}
+C2^{-k_0\alpha}2^{k(\sigma-\alpha)}[u]_{\alpha;B_{2^{-k}}}\nonumber\\
&\,\,+C2^{k_0(\sigma-\alpha)}\Big(\omega_f(2^{-k})
+\omega_a(2^{-k})\big(\sup_{x_0\in B_{2^{-k}}}\sum_{j=0}^\infty 2^{j(\sigma-\alpha)} [u]_{\alpha;B_{2^{-j}}(x_0)}+\|u\|_{L_\infty}\big)\Big).
\end{align}
Shifting the coordinates, from \eqref{eq3.25} we get% for any $x_0\in \bR^d$,
\begin{align}
                            \label{eq3.25b}
&2^{(k+k_0)(\sigma-\alpha)}\sup_{x_0\in \bR^d}[u]_{\alpha;B_{2^{-k-k_0}}(x_0)}\nonumber\\
&\le
C2^{-(k+k_0)\alpha}\sup_{x_0\in \bR^d}\sum_{j=1}^k 2^{(k-j)\sigma}[u]_{\alpha;B_{2^{j-k}}(x_0)}
+C2^{-(k+k_0)\alpha}[u]_{\alpha}\nonumber\\
&\quad +C2^{-k_0\alpha}2^{k(\sigma-\alpha)}\sup_{x_0\in \bR^d}[u]_{\alpha;B_{2^{-k}(x_0)}}
+C2^{k_0(\sigma-\alpha)}\Big(\omega_f(2^{-k})\nonumber\\
&\quad
+\omega_a(2^{-k})(\sup_{x_0\in \bR^d}\sum_{j=0}^\infty 2^{j(\sigma-\alpha)} [u]_{\alpha;B_{2^{-j}}(x_0)}+\|u\|_{L_\infty})\Big).
\end{align}
We take the summation of \eqref{eq3.25b} in $k=k_1,k_1+1,\ldots$ to obtain
\begin{align*}
&\sum_{k=k_1}^\infty 2^{(k+k_0)(\sigma-\alpha)}
\sup_{x_0\in \bR^d}[u]_{\alpha;B_{2^{-k-k_0}}(x_0)}\\
&\le
C\sum_{k=k_1}^\infty 2^{-(k+k_0)\alpha}\Big(\sup_{x_0\in \bR^d}\sum_{j=1}^k 2^{(k-j)\sigma}[u]_{\alpha;B_{2^{j-k}}(x_0)}\Big)
+C2^{-(k_1+k_0)\alpha}[u]_{\alpha}\\
&\quad+C2^{-k_0\alpha}\sum_{k=k_1}^\infty2^{k(\sigma-\alpha)}\sup_{x_0\in \bR^d}[u]_{\alpha;B_{2^{-k}(x_0)}}+C2^{k_0(\sigma-\alpha)}
\sum_{k=k_1}^\infty\Big(\omega_f(2^{-k})
\\
&\quad+\omega_a(2^{-k})\big(\sum_{j=0}^\infty 2^{j(\sigma-\alpha)} \sup_{x_0\in \bR^d}[u]_{\alpha;B_{2^{-j}}(x_0)}+\|u\|_{L_\infty}\big)\Big),
\end{align*}
which by switching the order of summations is further bounded by
\begin{align*}
&C2^{-k_0\alpha}\sum_{j=0}^\infty 2^{j(\sigma-\alpha)}\sup_{x_0\in \bR^d}[u]_{\alpha;B_{2^{-j}}(x_0)}\\
&\quad +C2^{-(k_1+k_0)\alpha}[u]_{\alpha}+C2^{k_0(\sigma-\alpha)}
\sum_{k=k_1}^\infty \omega_f(2^{-k})\\
&\quad+C2^{k_0(\sigma-\alpha)}\sum_{k=k_1}^\infty\omega_a(2^{-k})
\cdot\Big(\sum_{j=0}^\infty 2^{j(\sigma-\alpha)}\sup_{x_0\in \bR^d} [u]_{\alpha;B_{2^{-j}}(x_0)}+\|u\|_{L_\infty}\Big).
\end{align*}
The bound above together with the obvious inequality
$$
\sum_{j=0}^{k_1+k_0-1}2^{j(\sigma-\alpha)}\sup_{x_0\in \bR^d}[u]_{\alpha;B_{2^{-j}}(x_0)}
\le C2^{(k_1+k_0)(\sigma-\alpha)}[u]_\alpha,
$$
implies that
\begin{align*}
&\sum_{j=0}^\infty 2^{j(\sigma-\alpha)}\sup_{x_0\in \bR^d}[u]_{\alpha;B_{2^{-j}}(x_0)}\le
C2^{-k_0\alpha}\sum_{j=0}^\infty 2^{j(\sigma-\alpha)}\sup_{x_0\in \bR^d}[u]_{\alpha;B_{2^{-j}}(x_0)}
\\
&\quad +C2^{(k_1+k_0)(\sigma-\alpha)}[u]_{\alpha}+C2^{k_0(\sigma-\alpha)}
\sum_{k=k_1}^\infty \omega_f(2^{-k})\\
&\quad +C2^{k_0(\sigma-\alpha)}\sum_{k=k_1}^\infty\omega_a(2^{-k})\cdot
\Big(\sum_{j=0}^\infty 2^{j(\sigma-\alpha)} \sup_{x_0\in \bR^d}[u]_{\alpha;B_{2^{-j}}(x_0)}
+C\|u\|_{L_\infty}\Big).
\end{align*}
By first choosing $k_0$ sufficiently large and then $k_1$ sufficiently large, we get
$$
\sum_{j=0}^\infty 2^{j(\sigma-\alpha)}\sup_{x_0\in \bR^d}[u]_{\alpha;B_{2^{-j}}(x_0)}\le
C\|u\|_{\alpha}+C
\sum_{k=1}^\infty \omega_f(2^{-k}),
$$
which together with Lemma \ref{lem2.3} (i) and the interpolation inequality gives \eqref{eq8.16}.

{\bf Case 2: $\sigma\in (1,2)$.} For $k\in \bN$, let $v_M$ be the solution of
\begin{align*}
\begin{cases}
\inf_{\beta\in\mathcal{A}}\big(L_{\beta}(0)v_M+f_{\beta}(0)\big)=0\quad &\text{in}\,\, B_{2^{-k}}\\
v_M=g_M\quad &\text{in}\,\,B_{2^{-k}}^c
\end{cases},
\end{align*}
where $M\ge 2\|u-p_0\|_{L_\infty(B_{2^{-k}})}$ is a constant to be specified later,
\begin{align*}
g_M = \max\big(\min(u-p_0,M),-M\big),
\end{align*}
and  $p_0$ is the first-order Taylor's expansion of $u$ at the origin.

By Proposition \ref{prop1.1}, instead of \eqref{eq1.04}, we have
\begin{align}
&[v_M]_{\alpha+\sigma;B_{2^{-k-1}}}\le %C\|u\|_{\alpha;B_2}
C\sum_{j=0}^\infty 2^{(k-j)\sigma}M_j+C2^{k\sigma}[v_M]_{\alpha;B_{2^{-k}}}
\nonumber\\
&\le C\sum_{j=0}^k 2^{(k-j)\sigma}M_j+C\|Du\|_{L_\infty}+C2^{k\sigma}[v_M]_{\alpha;B_{2^{-k}}},
                        \label{eq4.22}
\end{align}
where $\alpha\in (0,\hat\alpha)$ and
$$
M_j=\sup_{x,x'\in B_{2^{j-k}},0<|x-x'|<2^{-k+1}}\frac{|u(x)-p_0(x)-u(x')+p_0(x')|}{|x-x'|^\alpha}.
$$
From \eqref{eq4.22} and the mean value formula,
\begin{align*}
&\|v_M-p_1\|_{L_\infty(B_{2^{-k-k_0}})}\le
C2^{-(k+k_0)(\sigma+\alpha)}\sum_{j=0}^k 2^{(k-j)\sigma}M_j\\
&\quad+C2^{-(k+k_0)(\sigma+\alpha)}\|Du\|_{L_\infty}
+C2^{-k\alpha-k_0(\sigma+\alpha)}[v_M]_{\alpha;B_{2^{-k}}},
\end{align*}
where $p_1$ is the first-order Taylor's expansion of $v_M$ at the origin.
The above inequality, \eqref{eq4.22}, and the interpolation inequality imply
\begin{align}
&[v_M-p_1]_{\alpha;B_{2^{-k-k_0}}}\le
C2^{-(k+k_0)\sigma}\sum_{j=0}^k 2^{(k-j)\sigma}M_j\nonumber\\
&\quad+C2^{-(k+k_0)\sigma}\|Du\|_{L_\infty}+ C2^{-k_0\sigma}[v_M]_{\alpha;B_{2^{-k}}},
                        \label{eq1.04bb}
\end{align}
Next $w_M:=g_M-v_M$ satisfies
\begin{equation*}
\begin{cases}
 \cM^+w_M\ge h_M-C_k\quad &\text{in}\,\, B_{2^{-k}},\\
 \cM^-w_M\le \hat{h}_M+C_k\quad &\text{in}\,\, B_{2^{-k}},\\
 w_M=0\quad &\text{in}\,\, B_{2^{-k}}^c,
\end{cases}
\end{equation*}
where
\begin{equation*}
h_M :=\cM^-(g_M-(u-p_0)),\quad \hat{h}_M:=\cM^+(g_M-(u-p_0)).
\end{equation*}
By the dominated convergence theorem, it is easy to see that
\begin{equation*}
\|h_M\|_{L_\infty(B_{2^{-k}})},\,\,
\|\hat{h}_M\|_{L_\infty(B_{2^{-k}})}\rightarrow 0\quad \text{as}\quad M\rightarrow \infty.
\end{equation*}
By the same argument as in the previous case,
$$
C_k\le \omega_f(2^{-k})+C\omega_a(2^{-k})\Big(\sup_{x_0\in \bR^d}\sum_{j=0}^\infty 2^{j(\sigma-\alpha)} [u-P_{x_0}u]_{\alpha;B_{2^{-j}}(x_0)}+\|Du\|_{L_\infty}\Big).
$$
Thus similar to \eqref{eq3.05}, choosing $M$ sufficiently large so that
\begin{equation*}
 \|h_M\|_{L_\infty(B_{2^{-k}})},\,\, \|\hat{h}_M\|_{L_\infty(B_{2^{-k}})}\le C_k/2,
\end{equation*}
we have
\begin{align}
                                \label{eq3.05bb}
[w_M]_{\alpha;B_{2^{-k}}}
&\le C2^{-k(\sigma-\alpha)}\Big(\omega_f(2^{-k})
+\omega_a(2^{-k})\|Du\|_{L_\infty}\nonumber\\
&\quad+\omega_a(2^{-k})\sup_{x_0\in \bR^d}\sum_{j=0}^\infty 2^{j(\sigma-\alpha)} [u-P_{x_0}u]_{\alpha;B_{2^{-j}}(x_0)}\Big).
\end{align}
Combining \eqref{eq1.04bb} and \eqref{eq3.05bb}, similar to \eqref{eq3.25b}, we obtain
\begin{align}
                            \label{eq3.25bb}
&2^{(k+k_0)(\sigma-\alpha)}\sup_{x_0\in \bR^d}\inf_{p\in \cP_1}[u-p]_{\alpha;B_{2^{-k-k_0}}(x_0)}\nonumber\\
&\le
C2^{-(k+k_0)\alpha}\sup_{x_0\in \bR^d}\sum_{j=0}^k 2^{(k-j)\sigma}[u-P_{x_0}u]_{\alpha;B_{2^{j-k}}(x_0)}
+C2^{-(k+k_0)\alpha}\|Du\|_{L_\infty} \nonumber\\
&\quad +C2^{-k_0\alpha}2^{k(\sigma-\alpha)}\sup_{x_0\in \bR^d}[u-P_{x_0}]_{\alpha;B_{2^{-k}(x_0)}}+C2^{k_0(\sigma-\alpha)}
\Big(\omega_f(2^{-k})
\nonumber\\
&\quad +\omega_a(2^{-k})(\sup_{x_0\in \bR^d}\sum_{j=0}^\infty 2^{j(\sigma-\alpha)} [u-P_{x_0}u]_{\alpha;B_{2^{-j}}(x_0)}+\|Du\|_{L_\infty})\Big).
\end{align}
Using \eqref{eq3.25bb}, as before we get
\begin{align}
                                \label{eq9.06}
&\sum_{k=k_1}^\infty 2^{(k+k_0)(\sigma-\alpha)}\sup_{x_0\in \bR^d}\inf_{p\in \cP_1}[u-p]_{\alpha;B_{2^{-k-k_0}}(x_0)}\nonumber\\
&\le
C2^{-k_0\alpha}\sum_{j=0}^\infty 2^{j(\sigma-\alpha)}\sup_{x_0\in \bR^d}[u-P_{x_0}u]_{\alpha;B_{2^{-j}}(x_0)}\nonumber\\
&\quad+C2^{-(k_1+k_0)\alpha}\|u\|_{1}+C2^{k_0(\sigma-\alpha)}
\sum_{k=k_1}^\infty \omega_f(2^{-k})\nonumber\\
&\quad+C2^{k_0(\sigma-\alpha)}\sum_{k=k_1}^\infty\omega_a(2^{-k})\cdot\sup_{x_0\in \bR^d}\sum_{j=0}^\infty 2^{j(\sigma-\alpha)} [u-P_{x_0}u]_{\alpha;B_{2^{-j}}(x_0)},
\end{align}
and
\begin{align*}
&\sum_{j=0}^\infty 2^{j(\sigma-\alpha)}\sup_{x_0\in \bR^d}\inf_{p\in \cP_1}[u-p]_{\alpha;B_{2^{-j}}(x_0)}\\
&\le
C2^{-k_0\alpha}\sum_{j=0}^\infty 2^{j(\sigma-\alpha)}\sup_{x_0\in \bR^d}[u-P_{x_0}u]_{\alpha;B_{2^{-j}}(x_0)}\\
&\quad+C2^{(k_1+k_0)(\sigma-\alpha)}\|u\|_{1}+C2^{k_0(\sigma-\alpha)}
\sum_{k=k_1}^\infty \omega_f(2^{-k})\\
&\quad+C2^{k_0(\sigma-\alpha)}\sum_{k=k_1}^\infty\omega_a(2^{-k})\cdot\sup_{x_0\in \bR^d}\sum_{j=0}^\infty 2^{j(\sigma-\alpha)} [u-P_{x_0}u]_{\alpha;B_{2^{-j}}(x_0)}.
\end{align*}
By choosing $k_0$ and $k_1$ sufficiently large and applying Lemma \ref{lem2.4}, we obtain
\begin{equation}
                                        \label{eq12.03}
\sum_{j=0}^\infty 2^{j(\sigma-\alpha)}\sup_{x_0\in \bR^d}\inf_{p\in \cP_1}[u-p]_{\alpha;B_{2^{-j}}(x_0)}\le
C\|u\|_{1}+C\sum_{k=1}^\infty \omega_f(2^{-k}).
\end{equation}
Finally, by Lemma \ref{lem2.3} (ii) and the interpolation inequality,
we get \eqref{eq8.16}.

{\bf Case 3: $\sigma=1$.} We proceed as in the previous case, but instead take $p_0$ to be the first-order Taylor's expansion of the mollification $u^{(2^{-k})}$ at the origin. We also assume that the solution $v$ to \eqref{eq 12.181} exists without carrying out another approximation argument.
By Proposition \ref{prop1.1} and Lemma \ref{lem2.2} with $\beta=\alpha/2$,
\begin{align}
&[v]_{\alpha+1;B_{2^{-k-1}}}\le %C\|u\|_{\alpha;B_2}
C\sum_{j=0}^\infty 2^{k-j}M_j+C2^k[v]_{\alpha;B_{2^{-k}}}\nonumber\\
&\le C\sum_{j=0}^\infty 2^{k-j+j\alpha/2}[u]_{\Lambda^\alpha(B_{2^{j-k}})}
+C2^k[v]_{\alpha;B_{2^{-k}}}\nonumber\\
&\le C\sum_{j=0}^k  2^{k-j+j\alpha/2}[u]_{\Lambda^\alpha(B_{2^{j-k}})}
+C2^{k\alpha/2}[u]_{\alpha}+C2^k[v]_{\alpha;B_{2^{-k}}}.
                        \label{eq4.22c}
\end{align}
From \eqref{eq4.22c} and the interpolation inequality, we obtain
\begin{align}
&[v-p_1]_{\alpha;B_{2^{-k-k_0}}}\nonumber\\
&\le
C2^{-(k+k_0)}\sum_{j=0}^k 2^{k-j+j\alpha/2}[u]_{\Lambda^\alpha(B_{2^{j-k}})}
+C2^{-(k+k_0)+k\alpha/2}[u]_{\alpha}+C2^{-k_0}[v]_{\alpha;B_{2^{-k}}}\nonumber\\
&\le
C2^{-(k+k_0)}\sum_{j=0}^k 2^{k-j+j\alpha/2}\inf_{p\in \cP_1}[u-p]_{\alpha;B_{2^{j-k}}}\nonumber\\
&\quad+C2^{-(k+k_0)+k\alpha/2}[u]_{\alpha}+C2^{-k_0}[v]_{\alpha;B_{2^{-k}}},
                        \label{eq1.04c}
\end{align}
where $p_1$ is the first-order Taylor's expansion of $v$ at the origin.
Next $w:=u-p_0-v$ satisfies \eqref{eq4.25}, where by the cancellation property \eqref{eq10.58},
$$
C_k\le \omega_f(2^{-k})+C\omega_a(2^{-k})\Big(\sup_{x_0\in \bR^d}\sum_{j=0}^\infty 2^{j(1-\alpha)}\inf_{p\in \cP_1} [u-p]_{\alpha;B_{2^{-j}}(x_0)}+\|u\|_{L_\infty}\Big).
$$
Therefore, similar to \eqref{eq3.05}, we have
\begin{align}
                                \label{eq3.05c}
[w]_{\alpha;B_{2^{-k}}}
&\le C2^{-k(1-\alpha)}\Big(\omega_f(2^{-k})\nonumber\\
&\quad+\omega_a(2^{-k})\big(\sup_{x_0\in \bR^d}\sum_{j=0}^\infty 2^{j(1-\alpha)} \inf_{p\in \cP_1} [u-p]_{\alpha;B_{2^{-j}}(x_0)}+\|u\|_{L_\infty}\big)\Big).
\end{align}
Notice that from \eqref{eq 12.62b} and the triangle inequality
\begin{align*}
&[v]_{\alpha;B_{2^{-k}}}
\le [w]_{\alpha;B_{2^{-k}}}+ [u-p_0]_{\alpha;B_{2^{-k}}}\\
&\le [w]_{\alpha;B_{2^{-k}}}+C[u]_{\Lambda^\alpha(B_{2^{-k+2}})}
\le [w]_{\alpha;B_{2^{-k}}}+C\inf_{p\in \cP_1}[u-p]_{\alpha;B_{2^{-k+2}}}.
\end{align*}

Similar to \eqref{eq3.25b}, combining \eqref{eq1.04c}, \eqref{eq3.05c}, and the inequality above, we obtain
\begin{align*}
             %               \label{eq3.25c}
&2^{(k+k_0)(1-\alpha)}\sup_{x_0\in \bR^d}\inf_{p\in \cP_1}[u-p]_{\alpha;B_{2^{-k-k_0}}(x_0)}\\
&\le
C2^{-(k+k_0)\alpha}\sup_{x_0\in \bR^d}\sum_{j=0}^k 2^{k-j+j\alpha/2}\inf_{p\in \cP_1}[u-p]_{\alpha;B_{2^{j-k}}(x_0)}+C2^{-(k/2+k_0)\alpha}[u]_{\alpha}\nonumber\\
&\quad + C2^{-k_0\alpha+(k-2)(1-\alpha)}\sup_{x_0\in \bR^d}\inf_{p\in \cP_1}[u-p]_{\alpha;B_{2^{-k+2}(x_0)}}+C2^{k_0(1-\alpha)}\Big(\omega_f(2^{-k})
\\
&\quad +\omega_a(2^{-k})\big(\sup_{x_0\in \bR^d}\sum_{j=0}^\infty 2^{j(1-\alpha)} \inf_{p\in \cP_1}[u-p]_{\alpha;B_{2^{-j}}(x_0)}+\|u\|_{L_\infty}\big)\Big),
\end{align*}
which by summing in $k=k_1,k_1+1,\ldots$ implies that
\begin{align*}
             %               \label{eq9.09}
&\sum_{k=k_1}^\infty 2^{(k+k_0)(1-\alpha)}\sup_{x_0\in \bR^d}\inf_{p\in \cP_1}[u-p]_{\alpha;B_{2^{-k-k_0}}(x_0)}\\
&\le
C2^{-k_0\alpha}\sum_{j=0}^\infty 2^{j(1-\alpha)}\sup_{x_0\in \bR^d}\inf_{p\in \cP_1}[u-p]_{\alpha;B_{2^{-j}}(x_0)}\\
&\quad+C2^{-(k/2+k_0)\alpha}[u]_{\alpha}+C2^{k_0(1-\alpha)}
\sum_{k=k_1}^\infty\omega_f(2^{-k})
+C2^{k_0(1-\alpha)}\sum_{k=k_1}^\infty\omega_a(2^{-k})\\
&\qquad\cdot(\sup_{x_0\in \bR^d}\sum_{j=0}^\infty 2^{j(1-\alpha)} \inf_{p\in \cP_1}[u-p]_{\alpha;B_{2^{-j}}(x_0)}+\|u\|_{L_\infty}),
\end{align*}
where for the first term on the right-hand side, we switched the order of summations to get
\begin{align*}
&\sum_{k=k_1}^\infty2^{-(k+k_0)\alpha}\sup_{x_0\in \bR^d}\sum_{j=0}^k 2^{k-j+j\alpha/2}\inf_{p\in \cP_1}[u-p]_{\alpha;B_{2^{j-k}}(x_0)}\\
&\le \sum_{k=0}^\infty2^{-(k+k_0)\alpha}\sum_{j=0}^k 2^{j+(k-j)\alpha/2}
\sup_{x_0\in \bR^d}\inf_{p\in \cP_1}[u-p]_{\alpha;B_{2^{-j}}(x_0)}\\
&=2^{-k_0\alpha}\sum_{j=0}^\infty 2^{j(1-\alpha/2)}\sup_{x_0\in \bR^d}\inf_{p\in \cP_1}[u-p]_{\alpha;B_{2^{-j}}(x_0)}\sum_{k=j}^\infty 2^{-k\alpha/2}\\
&\le C2^{-k_0\alpha}\sum_{j=0}^\infty 2^{j(1-\alpha)}\sup_{x_0\in \bR^d}\inf_{p\in \cP_1}[u-p]_{\alpha;B_{2^{-j}}(x_0)}.
\end{align*}
Therefore,
\begin{align*}
             %               \label{eq3.25c}
&\sum_{j=0}^\infty 2^{j(1-\alpha)}\sup_{x_0\in \bR^d}\inf_{p\in \cP_1}[u-p]_{\alpha;B_{2^{-j}}(x_0)}\\
&\le
C2^{-k_0\alpha}\sum_{j=0}^\infty 2^{j(1-\alpha)}\sup_{x_0\in \bR^d}\inf_{p\in \cP_1}[u-p]_{\alpha;B_{2^{-j}}(x_0)}
\\
&\quad+C2^{(k_1+k_0)(1-\alpha)}[u]_{\alpha}+C2^{k_0(1-\alpha)}
\sum_{k=k_1}^\infty\omega_f(2^{-k})
+C2^{k_0(1-\alpha)}\sum_{k=k_1}^\infty\omega_a(2^{-k})\\
&\qquad\cdot(\sum_{j=0}^\infty 2^{j(1-\alpha)}\sup_{x_0\in \bR^d} \inf_{p\in \cP_1}[u-p]_{\alpha;B_{2^{-j}}(x_0)}+\|u\|_{L_\infty}),
\end{align*}
Finally, to get \eqref{eq8.16} it suffices to choose $k_0$ and $k_1$ sufficiently large and apply Lemma \ref{lem2.3} (iii).
\end{proof}

Next we employ a localization argument as in \cite{DZ16}.
\begin{proof}[Proof of Theorem \ref{thm 1}]
Since the proof of the case when $\sigma\in(0,1)$ is almost the same as $\sigma\in(1,2)$ and actually simpler, we only present the latter and sketch the proof of the case when $\sigma = 1$ in the end.

{\bf The case when $\sigma\in(1,2)$.} We divide the proof into three steps.

{\em Step 1.} For $k=1,2,\ldots$, denote $B^k := B_{1-2^{-k}}$.
Let $\eta_k\in C_0^\infty(B^{k+1})$ be a sequence of nonnegative smooth cutoff functions satisfying
$\eta\equiv 1$ in $B^{k}$, $|\eta|\le 1$ in $B^{k+1}$, and $\|D^i\eta_k\|_{L_\infty}\le C2^{ki}$ for each $i\ge 0$. Set $v_k := u\eta_k\in C^{\sigma+}$.  A simple calculation reveals that
\begin{equation*}
\inf_{\beta\in \mathcal{A}}(L_\beta v_k-h_{k\beta}+\eta_k f_\beta)=0\quad \text{in}\,\, \bR^d,
\end{equation*}
where
\begin{equation*}
h_{k\beta}=h_{k\beta}(x) = \int_{\bR^d}\frac{\xi_k(x,y)a_\beta(x,y)}{|y|^{d+\sigma}}\,dy
\end{equation*}
and
\begin{equation*}
\xi_k(x,y) = u(x+y)(\eta_k(x+y)-\eta_k(x))-y\cdot D\eta_k(x)u(x).
\end{equation*}
Obviously, $\eta_k f_\beta$ is a Dini continuous function in $\bR^d$ and
\begin{align*}
&|\eta_k(x)f_\beta(x)-\eta_k(x')f_\beta(x')|\\
&\le \|\eta_k\|_{L_\infty}\omega_f(|x-x'|)+\|f_\beta\|_{L_\infty(B_1)}\|D\eta_k\|_{L_\infty}|x-x'|\\
&\le \omega_f(|x-x'|)+C2^{k}\|f_\beta\|_{L_\infty(B_1)}|x-x'|,
\end{align*}
where $C$ only depends on $d$.

{\em Step 2.} We first estimate the $L_\infty$ norm of $h_{k\beta}$.
By the fundamental theorem of calculus,
\begin{align*}
\xi_k(x,y) = y\cdot\int_{0}^1 u(x+y)D\eta_k(x+ty)-u(x)D\eta_k(x)\,dt.
\end{align*}
For $|y|\ge 2^{-k-3}$, $|\xi_k(x,y)|\le C2^{k}|y|\|u\|_{L_\infty}$. For $|y|<2^{-k-3}$, we can further write
\begin{equation*}
\xi_k(x,y) = y\cdot\int_{0}^1(u(x+y)-u(x))D\eta_k(x+ty)+u(x)(D\eta_k(x+ty)-D\eta_k(x))\,dt,
\end{equation*}
where the second term on the right-hand side is bounded by $C2^{2k}|y|^2|u(x)|$. To estimate the first term, we consider two cases: when $|x|\ge1-2^{-k-2}$, because $|y|<2^{-k-3}$, $\xi_k(x,y)\equiv 0$; when $|x|<1-2^{-k-2}$, we have
\begin{equation*}
\Big|y\cdot\int_0^1(u(x+y)-u(x))D\eta_k(x+ty)\,dt\Big|\le C2^{k}|y|^2\|Du\|_{L_\infty(B^{k+3})}.
\end{equation*}
\begin{comment}
For the case $7/8\le |x|<8/9$, there are two situations: first when $[x,x+y]\subset B_1$, then estimate is the same as the case $x\in B_{7/8}$; second if there exists $t_0\in (0,1)$ so that $|x+t_0y| = 1$, then
\begin{align*}
&y\int_0^1(u(x+y)-u(x))D\eta(x+ty)\,dt \\
&= y\int_0^1(u(x+y)-u(x))(D\eta(x+ty)-D\eta(x+t_0y))\,dt\le C|y|^2\|u\|_{L_\infty}.
\end{align*}
\end{comment}
Hence for $|y|<2^{-k-3}$,
\begin{equation*}
|\xi_k(x,y)|\le C|y|^2\big(2^{2k}|u(x)|+2^{k}\|Du\|_{L_\infty(B^{k+3})}\big).
\end{equation*}
Combining with the case when $|y|>2^{-k-3}$, we see that
\begin{equation}
                                \label{eq11.26}
\|h_{k\beta}\|_{L_\infty}\le C2^{\sigma k}\big(\|u\|_{L_\infty}+\|Du\|_{L_\infty(B^{k+3})}\big).
\end{equation}

Next we estimate the modulus of continuity of $h_{k\beta}$.
%Notice that since
%\begin{equation*}
%|h_{k\beta}(x)-h_{k\beta}(x^\prime)|\le \frac{2\|h_{k\beta}\|_{L_\infty}|x-x'|}{|x-x^\prime|},
%\end{equation*}
%it suffices to consider  $|x-x^\prime|$ small.
By the triangle inequality,
\begin{align}
                                    \label{eq11.39}
&|h_{k\beta}(x)-h_{k\beta}(x')|\nonumber \\
&\le \int_{\bR^d}\frac{|(\xi_k(x,y)-\xi_k(x',y))a_\beta(x,y)|}{|y|^{d+\sigma}}
+\frac{|\xi_k(x',y)(a_\beta(x,y)-a_\beta(x',y))|}{|y|^{d+\sigma}}\,dy\nonumber\\
&:= {\rm I}+{\rm II}.
\end{align}
Similar to \eqref{eq11.26}, by the estimates of $|\xi_k(x,y)|$ above, we have
\begin{equation}
                                \label{eq11.40}
{\rm II}\le C2^{\sigma k}\big(\|u\|_{L_\infty}+\|Du\|_{L_\infty(B^{k+3})}\big)\omega_a(|x-x'|),
\end{equation}
where $C$ depends on $d$, $\sigma$, and  $\Lambda$. For $I$, by the fundamental theorem of calculus,
\begin{align*}
&\xi_k(x,y)-\xi_k(x',y) = y\cdot\int_0^1\Big(u(x+y)D\eta_k(x+ty)-u(x)D\eta_k(x)\\
&\quad -u(x'+y)D\eta_k(x',x+ty)+u(x')D\eta_k(x')\Big)\,dt.
\end{align*}
When $|y|\ge2^{-k-3}$, similar to the estimate of $\xi_k(x,y)$, it follows that
\begin{equation}
                                    \label{eq11.41}
|\xi_k(x,y)-\xi_k(x',y)|\le C|y|\big(2^{k}\omega_u(|x-x'|)+2^{2k}\|u\|_{L_\infty}|x-x'|\big).
\end{equation}
The case when $|y|<2^{-k-3}$ is a bit more delicate. First,  by the fundamental theorem of calculus,
\begin{align*}
&|\xi_k(x,y)-\xi_k(x',y)|\\
&\le |y|\int_0^1|(u(x+y)-u(x))D_k\eta(x+ty)-(u(x'+y)-u(x'))D\eta_k(x'+ty)|\,dt\\
&\quad +|y|^2\int_0^1\int_0^1|u(x)D^2\eta_k(x+tsy)-u(x')D^2\eta_k(x'+tsy)|\,dt\,ds
:= {\rm III}+{\rm IV}.
\end{align*}
It is easily seen that
\begin{align*}
{\rm IV}\le C|y|^2(2^{2k}\omega_u(|x-x'|)+2^{3k}\|u\|_{L^\infty}|x-x'|).
\end{align*}
Next we bound ${\rm III}$ by considering four cases. When $x,x' \in (B^{k+2})^c$, we have ${\rm III}\equiv 0$. When $x,x'\in B^{k+2}$,
\begin{align*}
{\rm III} &\le |y|^2 \int_0^1\int_0^1 |Du(x+sy)D\eta_k(x+ty)-Du(x'+sy)D\eta_k(x'+ty)|\,ds\,dt\\
&\le C|y|^2\big(2^{k}[u]_{1+\alpha;B^{k+3}}|x-x'|^\alpha
+2^{2k}\|Du\|_{L_\infty(B^{k+3})}|x-x'|\big),
\end{align*}
where we choose $\alpha = \frac{\sigma-1}{2}.$
When $x\in B^{k+2}$ and $x'\in (B^{k+2})^c$,
\begin{align*}
{\rm III}& = |y|\int_0^1|(u(x+y)-u(x))D\eta_k(x+ty)|\,dt  \\
&\le |y|^2\int_0^1\int_0^1|Du(x+sy)(D\eta_k(x+ty)-D\eta_k(x'+ty))|\,ds\,dt\\
&\le C|y|^22^{2k}\|Du\|_{L_\infty(B^{k+3})}|x-x'|.
\end{align*}
The last case is similar.
In conclusion, we obtain
\begin{align*}
{\rm III}\le C|y|^2\big(2^{k}[u]_{1+\alpha;B^{k+3}}|x-x'|^\alpha
+2^{2k}\|Du\|_{L_\infty(B^{k+3})}|x-x'|\big).
\end{align*}
Combining the estimates of ${\rm III}, {\rm IV}$, and \eqref{eq11.41}, we obtain
\begin{align}
                            \label{eq11.42}
{\rm I} &\le C2^{k(\sigma+1)}\big(\omega_u(|x-x'|)+[u]_{1+\alpha;B^{k+3}}|x-x'|^\alpha\nonumber\\
&\quad +(\|Du\|_{L_\infty(B^{k+3})}+\|u\|_{L_\infty})|x-x'|\big).
\end{align}
By combining \eqref{eq11.39}, \eqref{eq11.40}, and \eqref{eq11.42}, we obtain
\begin{align*}
|h_{k\beta}(x)-h_{k\beta}(x')|&\le \omega_h(|x-x'|),%C2^{\sigma k}(\|u\|_{L_\infty}+\|Du\|_{L_\infty(B^{k+3})})\omega_a(|x-x^\prime|)\\
%&\quad+C2^{k(\sigma+1)}\big(\omega_u(|x-x^\prime|)+[u]_{1+\alpha;B^{k+3}}|x-x^\prime|^\alpha\\
%&\quad+(\|Du\|_{L_\infty(B^{k+3})}+\|u\|_{L_\infty})|x-x^\prime|\big).
\end{align*}
where
\begin{align}
                        \label{eq9.32}
&\omega_h(r) := C2^{\sigma k}\big(\|u\|_{L_\infty}+\|Du\|_{L_\infty(B^{k+3})}\big)\omega_a(r)\nonumber\\
&\quad +C2^{k(\sigma+1)}\big(\omega_u(r)+[u]_{1+\alpha;B^{k+3}}r^\alpha
+(\|Du\|_{L_\infty(B^{k+3})}+\|u\|_{L_\infty})r\big)
\end{align}
is a Dini function.

{\em Step 3.} We apply Proposition \ref{prop3.2} to $v_k$ to obtain
\begin{align*}
[v_k]_\sigma&\le C\|v_k\|_{L_\infty}+C\sum_{j=1}^\infty \big(\omega_h(2^{-j})+\omega_f(2^{-j})\big)+C2^k
\sup_\beta\|f_\beta\|_{L_\infty(B_1)}\\
&\le  C\|v_k\|_{L_\infty}+C2^{k(\sigma+1)} \big([u]_{1+\alpha;B^{k+3}}+\|Du\|_{L_\infty(B^{k+3})}+\|u\|_{L_\infty}\big)\\
&\quad +C\sum_{j=1}^\infty\big(2^{k(\sigma+1)}\omega_u(2^{-j})
+\omega_f(2^{-j})\big)+C2^k\sup_\beta\|f_\beta\|_{L_\infty(B_1)},
\end{align*}
where $C$ depends on $d$, $\lambda$, $\Lambda$, $\sigma$, and $\omega_a$, but independent of $k$. Since $\eta_k\equiv 1$ in $B^k$,  it follows that
\begin{align}
		\nonumber
[u]_{\sigma;B^{k}}&\le C2^{k(\sigma+1)}\|u\|_{L_\infty}+C2^{k(\sigma+1)}
\big([u]_{1+\alpha;B^{k+3}}+\|Du\|_{L_\infty(B^{k+3})}\big)\\
		\label{eq12.141}
&\quad +C_0\sum_{j=1}^\infty\big(2^{k(\sigma+1)}\omega_u(2^{-j})
+\omega_f(2^{-j})\big)
+C2^k\sup_\beta\|f_\beta\|_{L_\infty(B_1)}.
\end{align}
By the interpolation inequality, for any $\epsilon\in(0,1)$
\begin{equation}
		\label{eq12.142}
[u]_{1+\alpha;B^{k+3}}+\|Du\|_{L_\infty(B^{k+3})} \le \epsilon[u]_{\sigma;B^{k+3}}+C\epsilon^{-\frac{1+\alpha}{\sigma-(1+\alpha)}}\|u\|_{L_\infty}.
\end{equation}
Recall that $\alpha = \frac{\sigma-1}{2}$ and denote
$$
N := \frac{1+\alpha}{\sigma-(1+\alpha)} = \frac{\sigma+1}{\sigma-1}(>3).
$$
Combining \eqref{eq12.141} and \eqref{eq12.142} with $\epsilon = C_0^{-1}2^{-3k-12N-1}$, we obtain
\begin{align*}
&[u]_{\sigma;B^k}\le C2^{3k+(3k+12N)N}\|u\|_{L_\infty}+ 2^{-12N-1}[u]_{\sigma;B^{k+3}}\\
&\quad +C2^k\sup_\beta\|f_\beta\|_{L_\infty(B_1)}+C
\sum_{j=1}^\infty\big(2^{3k}\omega_u(2^{-j})+\omega_f(2^{-j})\big).
\end{align*}
Then we multiply $2^{-4kN}$ to both sides of the inequality above and get
\begin{align*}
&2^{-4kN}[u]_{\sigma;B^k}\le C2^{3k-kN}\|u\|_{L_\infty}+2^{-4N(k+3)-1}[u]_{\sigma;B^{k+3}}\\
&\quad +C2^{-4kN+k}\sup_\beta\|f_\beta\|_{L_\infty(B_1)}+C2^{-kN}
\sum_{j=1}^\infty\big(\omega_u(2^{-j})+\omega_f(2^{-j})\big).
\end{align*}
We sum up the both sides of the inequality above and obtain
\begin{align*}
&\sum_{k=1}^\infty2^{-4kN}[u]_{\sigma;B^k}\le C\sum_{k=1}^{\infty}2^{3k-kN}\|u\|_{L_\infty}
+\frac{1}{2}\sum_{k=4}^\infty2^{-4kN}[u]_{\sigma;B^k}\\
&\quad +C\sum_{k=1}^\infty2^{-4kN+k}\sup_\beta\|f_\beta\|_{L_\infty(B_1)}
+C
\sum_{j=1}^\infty\big(\omega_u(2^{-j})+\omega_f(2^{-j})\big),
\end{align*}
which further implies that
\begin{align*}
\sum_{k=1}^\infty2^{-4kN}[u]_{\sigma;B^k}\le C\|u\|_{L_\infty}+C\sup_\beta\|f_\beta\|_{L_\infty(B_1)}
+C\sum_{j=1}^\infty\big(\omega_u(2^{-j})+\omega_f(2^{-j})\big),
\end{align*}
where $C$ depends on $d$, $\lambda$, $\Lambda$, $\sigma$, and $\omega_a$.
In particular, when $k=4$, we deduce
\begin{equation}
                            \label{eq12.09}
[u]_{\sigma;B^{4}}\le C\|u\|_{L_\infty}+C\sup_\beta\|f_\beta\|_{L_\infty(B_1)}
+C\sum_{j=1}^\infty\big(\omega_u(2^{-j})+\omega_f(2^{-j})\big),
\end{equation}
which apparently implies \eqref{eq12.17}.
%{\color{red}which yields \eqref{eq12.17} upon replacing $u$ by $u-u(0)$}.

Finally, since $\|v_1\|_{1}$ is bounded by the right-hand side \eqref{eq12.09}, from \eqref{eq12.03}, we see that
\begin{align*}
\sum_{j=0}^\infty 2^{j(\sigma-\alpha)}\sup_{x_0\in \bR^d}\inf_{p\in \cP_1}[v_1-p]_{\alpha;B_{2^{-j}}(x_0)}\le C.
\end{align*}
This and \eqref{eq9.06} with $u$ replaced by $v_1$ and $f_\beta$ replaced by $\eta_1f_\beta-h_{1\beta}$ give
\begin{align*}
&\sum_{j=k_1}^\infty 2^{(j+k_0)(\sigma-\alpha)}\sup_{x_0\in \bR^d}\inf_{p\in \cP_1}[v_{1}-p]_{\alpha;B_{2^{-j-k_0}}(x_0)}\\
&\le
C2^{-k_0\alpha}+C2^{k_0(\sigma-\alpha)}
\sum_{j=k_1}^\infty \big(\omega_f(2^{-j})+\omega_a(2^{-j})+\omega_u(2^{-j})+2^{-j\alpha}\big),
\end{align*}
Here we also used Lemma \ref{lem2.4} and \eqref{eq9.32} with $k=1$.
Therefore, for any small $\varepsilon>0$, we can find $k_0$ sufficiently large then $k_1$ sufficiently large, depending only on $C$, $\sigma$, $\alpha$, $\omega_f$, $\omega_a$, $\omega_f$, and $\omega_u$, such that
$$
\sum_{j=k_1}^\infty 2^{(j+k_0)(\sigma-\alpha)}\sup_{x_0\in \bR^d}\inf_{p\in \cP_1}[v_1-p]_{\alpha;B_{2^{-j-k_0}}(x_0)}<\varepsilon,
$$
which, together with the fact that $v_1 = u$ in $B_{1/2}$ and the proof of Lemma \ref{lem2.3} (ii), indicates that
$$
\sup_{x_0\in B_{1/2}} [u]_{\sigma;B_r(x_0)}\to 0 \quad\text{as}\quad r\to 0
$$
with a decay rate depending  only on $d$, $\lambda$, $\Lambda$, $\omega_a$, $\omega_f$, $\omega_u$, $\sup_{\beta\in \cA}\|f_\beta\|_{L_\infty(B_1)}$, and $\sigma$. Hence, the proof of the case when $\sigma\in (1,2)$ is completed.

{\bf The case when $\sigma = 1$.} The proof is very similar to the case when $\sigma\in (1,2)$ and we only provide a sketch here.  We use the same notation as in the previous case
\begin{equation*}
h_{k\beta}(x) = \int_{\bR^d}\frac{\xi_k(x,y) a_\beta(x,y)}{|y|^{d+1}}\,dy,
\end{equation*}
where
$$
\xi_k(x,y) := u(x+y)(\eta_k(x+y)-\eta_k(x))-u(x)y\cdot D\eta_k(x)\chi_{B_1}.
$$
It is easy to see that when $|y|\ge2^{-k-3}$,
$$|\xi_k(x,y)|\le C2^k|y|\|u\|_{L_\infty}.$$
On the other hand, when $|y|<2^{-k-3}$,
\begin{align*}
|\xi_k(x,y)| &\le |y|\int_0^1|u(x+y)D\eta_k(x+ty)-u(x)D\eta_k(x)|\,dt\\
&\le C2^{k}|y|w_u(|y|)+C2^{2k}|y|^2|u(x)|.
\end{align*}
 Therefore,
 \begin{align*}
 \|h_{k\beta}\|_{L_\infty}\le C2^k\Big(\|u\|_{L_\infty}+\int_0^1\frac{w_u(r)}{r}\,dr\Big).
 \end{align*}
Next we estimate the modulus of continuity of $h_{k\beta}$ and proceed as in the case when $\sigma\in (1,2)$.  Indeed, it is easily seen that
\begin{equation*}
{\rm II} \le C2^k\Big(\|u\|_{L_\infty}
+\int_0^1\frac{\omega_u(r)}{r}\,dr\Big)\,\omega_a(|x-x'|).
\end{equation*}
To estimate ${\rm I}$, we write
\begin{align*}
\xi_k(x,y)-\xi_k(x',y) = u(x+y)(\eta_k(x+y)-\eta_k(x))-u(x)y\cdot D\eta_k(x)\chi_{B_1}\\
-u(x'+y)(\eta_k(x'+y)-\eta_k(x'))+u(x')y\cdot D\eta_k(x')\chi_{B_1}.
\end{align*}
Obviously, when $|y|\ge2^{-k-3}$
\begin{align}
                                    \label{eq2.03}
|\xi_k(x,y)-\xi_k(x',y)|\le C2^{2k}|y|\big(\|u\|_{L_\infty}|x-x'| + \omega_u(|x-x'|)\big).
\end{align}
When $|y|<2^{-k-3}$, we have $\chi_{B_1}(y) = 1$. Thus similar to the first case,
\begin{align*}
&|\xi_k(x,y)-\xi_k(x',y)|\\
&\le |y|\int_0^1|(u(x+y)-u(x))D\eta_k(x+ty)-(u(x'+y)-u(x'))D\eta_k(x'+ty)|\,dt\\
&\quad +|y|^2\int_0^1\int_0^1|u(x)D^2\eta_k(x+tsy)-u(x')D^2\eta_k(x'+tsy)|\,dt\,ds := {\rm III}+{\rm IV}.
\end{align*}
Clearly,
\begin{align*}
{\rm IV}\le C2^{3k}|y|^2(\omega_u(|x-x'|)+\|u\|_{L_\infty}|x-x'|).
\end{align*}
When $x,x' \in (B^{k+2})^c$, we have ${\rm III}\equiv 0$.
When $x,x'\in B^{k+2}$, by the triangle inequality,
\begin{align*}
{\rm III}&\le |y|\int_{0}^1|u(x+y)-u(x)-(u(x'+y)-u(x'))||D\eta_k(x+ty)|\,dt\\
&\quad +|y|\int_0^1|u(x'+y)-u(x')||D\eta_k(x+ty)-D\eta_k(x'+ty)|\,dt\\
&\le C2^{k}|y|^{1+\gamma}|x-x'|^{\zeta}[u]_{\zeta+\gamma;B^{k+3}}
+C2^{2k}|y|\omega_u(|y|)|x-x'|,
\end{align*}
where $C$ depends on $d$, and $\zeta+\gamma<1$. Here we used the inequality
\begin{align*}
|u(x+y)-u(x)-(u(x'+y)-u(x'))|\le 2[u]_{\gamma+\zeta}|x-x'|^\zeta|y|^\gamma.
\end{align*}
Set $\gamma=\zeta = 1/4$.
When $x\in B^{k+2}$ and $x'\in (B^{k+2})^c$,
\begin{align*}
{\rm III}&=|y|\int_0^1|(u(x+y)-u(x))D\eta_k(x+ty)|\,dt\\
&=|y|\int_0^1|(u(x+y)-u(x))(D\eta_k(x+ty)-D\eta_k(x'+ty))|\,dt\\
&\le C2^{2k}|y|\omega_u(|y|)|x-x'|.
\end{align*}
The case when $x'\in B^{k+2}$ and $x\in (B^{k+2})^c$ is similar.
Then with the estimates of ${\rm III}$ and ${\rm IV}$ above, we obtain that when $|y|<2^{-k-3}$,
\begin{align*}
|\xi_k(x,y)-\xi_k(x',y)|\le C2^{3k}|y|^2\big(\omega_u(|x-x'|)+\|u\|_{L_\infty}|x-x'|\big)\\
+ C2^k|y|^{5/4}|x-x'|^{1/4}[u]_{1/2;B^{k+3}}+C2^{2k}|y|\omega_u(|y|)|x-x'|,
\end{align*}
which, combining with \eqref{eq2.03} for the case when $|y|\ge 2^{-k-3}$, further implies that
\begin{align*}
{\rm I}\le &C2^{2k}\Big(\omega_u(|x-x'|)+\|u\|_{L_\infty}|x-x'|\\
&\quad+[u]_{1/2;B^{k+3}}|x-x'|^{1/4}
+|x-x'|\int_0^1\frac{w_u(r)}{r}\,dr\Big),
\end{align*}
where $C$ depends on $d$ and $\Lambda$. Hence, we obtain the estimate of the modulus of continuity of $h_{k\beta}(x)$:
\begin{align*}
\omega_h(r)= C2^{2k}\Big(\omega_u(r)+[u]_{1/2;B^{k+3}}r^{1/4}
+\big(\|u\|_{L_\infty}+\int_0^1\frac{\omega_u(r)}{r}\,dr\big)
\big(r+\omega_a(r)\big)\Big).
\end{align*}
The rest of the proof is the same as the previous case. %Therefore, we finish the proof for the case $\sigma= 1$.
\end{proof}

%%%%%%%%%%%%%%%%%%%%%%%%%%%%%%%%%%%%%%

\bibliographystyle{plain}
\def\cprime{$'$}

\end{document}